\soulregister{\cite}{7}
\soulregister{\ref}{7}
\soulregister{\eqref}{7}
\definecolor{lightyellow}{Hsb}{60,.5,1}
  \definecolor{link1}{Hsb}{240,1,.75}
  \definecolor{link2}{Hsb}{240,1,.5}
\newtheorem{theorem}{Theorem}[section]
\newtheorem*{theorem*}{Theorem}
\newtheorem{lemma}[theorem]{Lemma}
\newtheorem{proposition}[theorem]{Proposition}
\newtheorem{remark}[theorem]{Remark}
\newtheorem*{remark*}{Remark}
\newcommand{\ONE}{\mathds{1}}
\definecolor{darkred}{rgb}{0.7,0.1,0.1}
\definecolor{red}{rgb}{1,0,0}
\definecolor{green}{rgb}{0,1,0}
\definecolor{blue}{rgb}{0,0,1}
\definecolor{cyan}{rgb}{0,1,1}
\newcommand{\R}{\mathbb{R}}
\def\eps{{\varepsilon}}
\def\cF{\mathcal{F}}
\def\cV{\mathcal{V}}
\def\cO{\mathcal{O}}
\def\cI{\mathcal{I}}
\def\cD{\mathcal{D}}
\def\cI{\mathcal{I}}
\def\e{\eps}
\def\P{\mathbf{P}}
\def\E{\mathbf{E}}
\DeclareBoldMathCommand{\one}{1}
\numberwithin{equation}{section}
\renewcommand{\epsilon}{\eps}
\renewcommand{\ge}{\geqslant}
\renewcommand{\geq}{\ge}
\renewcommand{\le}{\leqslant}
\renewcommand{\leq}{\le}
\newcommand{\D}{\mathcal{D}}
\def\cC{\mathcal{C}}
\renewcommand{\downarrow}{\to}
\begin{document}
\title{Malliavin calculus approach to long exit times from an unstable equilibrium}
\author{Yuri Bakhtin, Zsolt Pajor-Gyulai}
\institute{
  New York University, \email{bakhtin@cims.nyu.edu, zsolt@cims.nyu.edu}
}
\maketitle
\begin{abstract}
For a one-dimensional smooth vector field in a neighborhood of an unstable equilibrium, we consider the associated
dynamics perturbed by small noise. Using  Malliavin calculus tools, we obtain precise vanishing noise asymptotics
for the tail of the exit time and for the exit distribution conditioned on atypically long exits.
\end{abstract}

\section{Introduction}\label{sec:setting}

Exit problems for small random perturbations of random dynamical systems have been studied for several decades. The most celebrated asymptotic results in this direction are large deviation estimates of the Freidlin--Wentzell theory and their extensions, see the classical book~\cite{FW2012}.

There are situations when large deviation results are not sufficient for detailed analysis of the system's behavior. In particular, the analysis of noisy heteroclinic networks, i.e., systems with multiple unstable equilibria  connected to each other
by heteroclinic orbits, requires studying distributional scaling limit theorems for exit points and exit times. This approach allows for an iteration scheme that leads to a detailed description of typical diffusion paths on time scales logarithmic in noise magnitude $\eps>0$, see~\cite{Bak2010},\cite{Bak2011},\cite{AB2011}.  In those papers, the
results on the asymptotics of exits from neighborhoods of critical points extend the results of~\cite{Kifer1981} and~\cite{Day95}
 where
the leading deterministic logarithmic term for the exit time $\tau^\eps$ and the leading random correction to the logarithmic term were  computed. Namely, for a class of initial conditions near the critical point
(or the associated stable manifold), it was established that
if $\lambda>0$ is the leading eigenvalue of the linearization of the dynamics near the critical point, then
\[
\tau^\eps=\frac{1}{\lambda}\log\frac{1}{\eps}+\theta_\eps,
\]
where random variables $\theta_\eps$ converge in distribution as $\eps\to0$. Moreover, the limiting distribution is 
nontrivial and has explicit representations. The associated distributions of exit locations were also
studied in~\cite{Eizenberg:MR749377} and~\cite{Bak2008}.

To extend the results of~\cite{Bak2010},\cite{Bak2011},\cite{AB2011} to longer time scales,
one needs to study rare events responsible for the unlikely transitions in the heteroclinic networks. One
such rare event can be described as withstanding the repulsion near an unstable critical point for an atypically long time. Thus, we need to study the asymptotics of the tail of $\tau_\eps$. The
best results in this direction, to the best of our knowledge, were established in~\cite{Mikami:MR1357028}, where a large deviation estimate in the form of logarithmic equivalence
was obtained:
\[
\lim_{\eps\to 0}\frac{\log \P\left(\tau^\eps>\frac{\alpha}{\lambda} \log \frac{1}{\eps}\right)}{\log \eps}=\alpha-1,\quad \alpha>1.
\]

In this paper, for the
one-dimensional case, we provide more delicate estimates proving that for all $\alpha>1$,
\begin{equation}\label{eq:main_res_informal}
\P\left(\tau^\eps>\frac{\alpha}{\lambda}\log\frac{1}{\eps}\right)=\Lambda\eps^{\alpha-1}(1+o(1)),\quad \eps\to 0,
\end{equation}
and computing the precise value of the constant~$\Lambda$, see our main result, Theorem~\ref{thm:main-theorem}, below. We give an explicit expression for~$\Lambda$ in terms of the starting point ranging through a neighborhood of the critical point that we describe. We also explicitly compute the limiting distribution of the exit location conditioned on the rare event of withstanding  the repulsion
for an atypically long time, and it turns out that it does not depend on the starting point within that neighborhood.

It is essential for our proof to study the asymptotic behavior of densities of certain auxiliary random variables. Besides the traditional methods of stochastic analysis, we use the Malliavin calculus tools. We believe
that our approach can be extended to higher dimensions, although the extension is not straightforward, and it will be addressed in a separate paper.

\medskip

Let us now introduce the setting more formally.
We will consider the family of stochastic differential equations
\begin{equation}\label{eq:SDE}
dX_{\eps}(t)=b\left(X_{\eps}(t)\right)dt+\eps\sigma\left(X_{\eps}(t)\right)dW(t),
\end{equation}
on a bounded interval $\cI=[q_-,q_+]\subseteq \mathbb{R}$, where the drift is given by a  vector field~$b\in\mathcal{C}^{2}(\R)$ and the random perturbation is given via a standard Brownian motion $W$
with respect to a filtration $(\cF_t)_{t\ge 0}$ defined on some probability space $(\Omega,\mathcal{F},\mathrm{P})$. The noise magnitude is given by a small parameter $\eps>0$ in front of the diffusion coefficient $\sigma\in\mathcal{C}^2(\R)$ which is assumed to satisfy $\sigma(0)>0$. Although we are interested only
in the evolution within $\cI$, we
can assume that $b$ and $\sigma$ are globally Lipschitz without changing the setting.

Standard results on stochastic differential equations (see e.g \cite{KS1991}) imply that for any starting location $X^{\eps}(0)\in\cI$, the equation~\eqref{eq:SDE} has a unique strong solution up to
\[
\tau_{\cI}^{\eps}=\inf\{t\geq 0: X_{\eps}(t)\in\partial \cI\},
\]
the exit time from $\cI$.

Let $(S^t)_{t\in\R}$ be the flow generated by the vector field $b$, i.e., $x(t)=S^tx_0$ is the solution of the autonomous ordinary differential equation
\begin{equation}\label{eq:deterministic-ODE}
\dot{x}(t)=b(x(t)),\qquad x(0)=x_0\in\R.
\end{equation}
We assume that there is a unique repelling zero of the vector field $b$. Without loss of generality we
place it at the origin. In other words, we assume that $b(0)=0$ and, for some $\lambda>0$ and $\eta\in\cC^2(\cI)$,
\begin{equation}\label{eq:linearizable}
b(x)=\lambda x+\eta(x)|x|^2,\qquad x\in\cI.
\end{equation}
Note that since the origin is the only zero of $b$ in the closed interval $\cI$, this assumption implies that for all $x\neq 0$, there is a uniquely defined
finite time $T(x)$ such that $S^{T(x)}\in\partial\cI$.

%It was shown in \cite{Day95} that there is a family of random  a family of random variables $\{\psi_\e\}_{\e>0}$ with $\P(\psi_\e=0)=0$ such that
%\[
%X_{\e}(\tau_{\cI}^\e)=q_{\mathrm{sign}(\psi_\e)}
%\]
%and the random vector
%\[
%\Theta_\e=\left(\psi_\e,\tau_{\cI}^\e-\frac{1}{\lambda}\log\e^{-1}\right)
%\]
%converges in distribution as $\e\downarrow 0$. In this paper, we are interested in the rare event when $X_\e$ withstands the repulsion near the origin for an atypically long time before exiting $\cI$.
%\purple{[We are, in fact, improving the result of \cite{Mikami:MR1357028}, where the analogous relation at the level of large deviations or logarithmic asymptotics was obtained, with limit equal to $\alpha-1$. So perhaps insert that result here first.]}

Under
the condition \eqref{eq:linearizable},  the map $f:\cI\to\R$ defined by
\begin{equation}\label{eq:coord-change-diffeo}
f(x)=\lim_{t\to\infty}e^{\lambda t} S^{-t}x=x-\int_0^\infty e^{\lambda s}\eta(S^{-s}x)|S^{-s}x|^2ds
\end{equation}
is a $\cC^2$-diffeomorphism, see \cite{Eizenberg:MR749377}. It preserves the order on $\R$, so $f(q_-)<0<f(q_+)$.

Our main result is the following.
\begin{theorem}\label{thm:main-theorem}
Consider $X_\e$ defined by \eqref{eq:SDE} with initial condition $X_\e(0)=\e x$ and let $K(\e)$ be a
function such that
\begin{equation}\label{eq:K-criterion}
\lim_{\e\downarrow 0}\e^{\beta}K(\e)=0,\qquad \forall \beta>0.
\end{equation}
Then, for all $\alpha>1$,
\begin{equation}\label{eq:main-thm-claim-1}
\lim_{\e\downarrow 0}\sup_{|x|\leq K(\e)}\left|\e^{-(\alpha-1)}\mathrm{P}\left(\tau_{\cI}^\e>\frac{\alpha}{\lambda}\log\e^{-1}\right)-
\sqrt{\frac{\lambda}{\pi}}\frac{e^{-\lambda\left(\frac{x}{\sigma(0)}\right)^2}}{\sigma(0)}\left(|f(q_+)|+|f(q_-)|\right)
\right|
=0
\end{equation}
and
\begin{equation}\label{eq:main-thm-claim-2}
\lim_{\e\downarrow 0}\sup_{|x|\leq K(\e)}\left|\mathrm{P}\left(X_\e\left(\tau_{\cI}^\e\right)=q_{\pm}\bigg|\tau_{\cI}^\e>\frac{\alpha}{\lambda}\log\e^{-1}\right)-\frac{|f(q_{\pm})|}{|f(q_+)|+|f(q_-)|}\right|=0.
\end{equation}
\end{theorem}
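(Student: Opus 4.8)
\emph{Plan of proof.}

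\textbf{Step 1: linearization and reduction to a scalar random variable.} I would first push the dynamics through the linearizing diffeomorphism $f$ of \eqref{eq:coord-change-diffeo}. Setting $Y_\eps=f(X_\eps)$, Itô's formula turns \eqref{eq:SDE} into an SDE with \emph{exactly linear} drift,
\[
dY_\eps(t)=\lambda Y_\eps(t)\,dt+\eps\,a(Y_\eps(t))\,dW(t)+\eps^{2} c(Y_\eps(t))\,dt,\qquad a(y)=f'(f^{-1}(y))\,\sigma(f^{-1}(y)),
\]
with $a(0)=\sigma(0)>0$, and the exit time is unchanged, $\tau_\cI^\eps=\inf\{t:\ Y_\eps(t)\notin(f(q_-),f(q_+))\}$. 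Variation of constants gives $Y_\eps(t)=\eps\,e^{\lambda t}\,\xi_\eps(t)$ with
\[
\xi_\eps(t)=\frac{f(\eps x)}{\eps}+\int_0^t e^{-\lambda s}a(Y_\eps(s))\,dW(s)+\eps\int_0^t e^{-\lambda s}c(Y_\eps(s))\,ds ,
\]
and $f(\eps x)/\eps=x+o(1)$ uniformly on $|x|\le K(\eps)$ by the $\cC^{2}$-smoothness of $f$ and \eqref{eq:K-criterion}. Since $0$ is the only zero of $b$, once $|Y_\eps|$ leaves a fixed small interval $[-\rho,\rho]$ it is swept to $\partial f(\cI)$ within a time that is $O(1)$ plus a quantity converging in probability; hence, writing $\sigma_\eps$ for the exit time of $[-\rho,\rho]$, the full exit time $\tau_\cI^\eps$ equals $\sigma_\eps$ plus a bounded correction. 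On $\{\sigma_\eps>t\}$ one has $a(Y_\eps)=\sigma(0)+O(\rho)$ and the stochastic integral in $\xi_\eps$ is an $L^{2}$-bounded martingale, so $\xi_\eps(t)$ has essentially stabilized for $t$ large; set $\xi_\eps\defeq\xi_\eps(\sigma_\eps)$. From $Y_\eps(t)\approx\eps\,e^{\lambda t}\,\xi_\eps$ one reads off that, up to an event of probability $o(\eps^{\alpha-1})$ uniformly in $|x|\le K(\eps)$,
\[
\Bigl\{\tau_\cI^\eps>\tfrac{\alpha}{\lambda}\log\eps^{-1}\Bigr\}\ =\ \bigl\{\,0<\pm\,\xi_\eps<|f(q_\pm)|\,\eps^{\alpha-1}\,\bigr\},\qquad\text{and on this event } X_\eps(\tau_\cI^\eps)=q_\pm ,
\]
the sign of $\xi_\eps$ selecting the exit endpoint. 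Furthermore, on the relevant event $\xi_\eps$ is close to $\xi\defeq x+\sigma(0)\int_0^\infty e^{-\lambda s}\,dW(s)$, which is $\cN\!\bigl(x,\ \sigma(0)^{2}/(2\lambda)\bigr)$, with density at the origin $\tfrac1{\sigma(0)}\sqrt{\lambda/\pi}\,e^{-\lambda(x/\sigma(0))^{2}}$ --- exactly the constant in \eqref{eq:main-thm-claim-1}. Because $x$ may grow with $\eps$, this identification has to be upgraded to a quantitative, $x$-uniform density statement.

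\textbf{Step 2: Malliavin density estimate (the crux).} Step 1 reduces both claims to controlling $\P(\xi_\eps\in I_\eps^\pm)$ for the shrinking intervals $I_\eps^{+}=(0,|f(q_+)|\eps^{\alpha-1})$ and $I_\eps^{-}=(-|f(q_-)|\eps^{\alpha-1},0)$, i.e.\ to showing that $\xi_\eps$ is absolutely continuous with a density $p_{\xi_\eps}$ obeying
\[
\lim_{\eps\to0}\ \sup_{|x|\le K(\eps)}\ \sup_{|z|\le C\eps^{\alpha-1}}\ \Bigl|\,p_{\xi_\eps}(z)-\tfrac1{\sigma(0)}\sqrt{\tfrac{\lambda}{\pi}}\,e^{-\lambda(x/\sigma(0))^{2}}\,\Bigr|=0 .
\]
Here Malliavin calculus enters: as a smooth functional of the Brownian path, $\xi_\eps\in\mathbb D^{\infty}$, and the integration-by-parts formula gives the representation $p_{\xi_\eps}(z)=\E\bigl[\ONE_{\{\xi_\eps>z\}}\,H_\eps\bigr]$ with an explicit weight $H_\eps$ assembled from the Malliavin derivative $D\xi_\eps$ and the Malliavin covariance $\gamma_\eps=\norm{D\xi_\eps}_{L^{2}(\R_+)}^{2}$. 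I would prove: (i) uniform-in-$\eps$ bounds on the Watanabe--Sobolev norms $\norm{\xi_\eps}_{k,p}$; (ii) a uniform nondegeneracy bound $\sup_\eps\E[\gamma_\eps^{-p}]<\infty$, which should follow from $\sigma(0)>0$ and the weight $e^{-\lambda s}$ --- already the contribution of $W$ over a fixed initial time window keeps $\gamma_\eps$ bounded away from $0$ with overwhelming probability, irrespective of the later confinement of $Y_\eps$; (iii) convergence $\xi_\eps\to\xi$ in every $\mathbb D^{k,p}$ after removing the $x+o(1)$ shift (the $O(\rho)$ discrepancy in $a$ being sent to $0$ by finally letting $\rho\to0$). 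Items (i)--(iii) yield uniform two-sided Gaussian-type bounds on $p_{\xi_\eps}$ together with pointwise convergence $p_{\xi_\eps}\to p_\xi$; the Gaussian tail control absorbs the supremum over $|x|\le K(\eps)$ (using $\eps^{\beta}K(\eps)\to0$ for all $\beta>0$), and Hölder continuity of $p_{\xi_\eps}$ near the origin with a uniform modulus absorbs the supremum over the shrinking range $|z|\le C\eps^{\alpha-1}$.

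\textbf{Step 3: assembly.} Combining the two steps, $\P\bigl(\tau_\cI^\eps>\tfrac\alpha\lambda\log\eps^{-1}\bigr)=\P(\xi_\eps\in I_\eps^{+})+\P(\xi_\eps\in I_\eps^{-})+o(\eps^{\alpha-1})=p_{\xi_\eps}(0)\bigl(|f(q_+)|+|f(q_-)|\bigr)\eps^{\alpha-1}\bigl(1+o(1)\bigr)$ uniformly in $|x|\le K(\eps)$, which is \eqref{eq:main-thm-claim-1}. For \eqref{eq:main-thm-claim-2}, the same reduction gives $\P\bigl(X_\eps(\tau_\cI^\eps)=q_\pm\,\big|\,\tau_\cI^\eps>\tfrac\alpha\lambda\log\eps^{-1}\bigr)=\P(\xi_\eps\in I_\eps^\pm)/\P(\xi_\eps\in I_\eps^{+}\cup I_\eps^{-})+o(1)$, and the ratio of the two density estimates equals $|f(q_\pm)|/(|f(q_+)|+|f(q_-)|)$, the starting point dropping out because the common factor $p_{\xi_\eps}(0)$ cancels. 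The step I expect to be the main obstacle is Step 2: the uniform-in-$\eps$ nondegeneracy of the Malliavin covariance of $\xi_\eps$ and the uniform convergence of its density on an origin-neighborhood shrinking like $\eps^{\alpha-1}$ while the mean $x$ is simultaneously allowed to grow subpolynomially. A secondary, more technical difficulty is the reduction in Step 1 --- showing that the fluctuations of $\xi_\eps(t)$ before the near-deterministic exit neither provoke a premature exit from $\cI$ nor corrupt the equivalence ``$\xi_\eps$ small $\Leftrightarrow$ exit late'' at the required precision $o(\eps^{\alpha-1})$.
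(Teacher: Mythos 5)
Your overall strategy --- linearize via $f$, apply Duhamel to get $Y_\eps(t)=\eps e^{\lambda t}\xi_\eps(t)$, reduce the tail event to a small-ball probability for $\xi_\eps$ near the origin, and resolve it through a Malliavin-calculus density estimate --- coincides with the paper's. You correctly flag Step 2 as the crux, but there is a genuine gap precisely there, and it is not merely technical. To decide whether $\tau_\cI^\eps>\frac{\alpha}{\lambda}\log\eps^{-1}$ you must evaluate $\xi_\eps$ at a (deterministic surrogate for a) time $T_\eps$ with $e^{\lambda T_\eps}\asymp \eps^{-\alpha}$. Your items (i)--(iii) --- uniform Watanabe--Sobolev bounds, uniform nondegeneracy, and convergence of $\xi_\eps(T_\eps)$ to the Gaussian $\xi$ in $\mathbb{D}^{k,p}$ --- fail once $\alpha\ge 2$: the second Malliavin derivative of $\xi_\eps(T_\eps)$ contains terms of the form $\eps^2\int^{T_\eps} e^{\lambda s}\,\tilde\sigma''(Y_\eps(s))\,\cD_r\xi_\eps(s)\,\cD_t\xi_\eps(s)\,dW(s)$, whose unconditional moments are of order $\eps^{2}e^{\lambda T_\eps}=\eps^{2-\alpha}$, which does not vanish for $\alpha\ge2$. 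This is not an artifact of a crude bound: without conditioning on the rare event, the process has typically left the linearization neighborhood long before time $T_\eps$, so the nonlinear contributions to $\cD^2\xi_\eps$ are genuinely not small. The paper accordingly proves the uniform density convergence only for times $T'_\eps$ with $\lambda T'_\eps/\log\eps^{-1}\le 2-\kappa$ (Proposition~\ref{prop:main-malliavin-lemma}) and then extends to the full time scale by an iterative Markov-property scheme: $T_\eps$ is split into $N=\lfloor\theta\rfloor+1$ equal pieces, and the small-ball estimate is propagated backwards step by step, the increment over each piece being replaced by an independent Gaussian $H_{\eps,k}$ with matched variance. This recursion (effectively a quasi-stationary analysis under the no-exit conditioning) is absent from your plan; without it the argument covers only $1<\alpha<2$.

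Two secondary issues. First, you define $\xi_\eps$ by evaluation at the stopping time $\sigma_\eps$; Malliavin differentiability of functionals evaluated at stopping times is delicate, and the paper instead first replaces the exit time by the deterministic $T_\eps$ on the event of interest (Lemma~\ref{lem:random-to-det-time}), with an error that must itself be shown to be $o(\eps^{\alpha-1})$ at the scale $a(\eps)\eps^{\gamma}$ --- a point your ``up to an event of probability $o(\eps^{\alpha-1})$'' glosses over. Second, your final ``$\rho\to0$'' to remove the $O(\rho)$ discrepancy in the diffusion coefficient is unnecessary and hard to make uniform in $x$; the paper gets the constant $\sigma(0)$ directly from the fact that $Y_\eps(t)=O(\eps e^{\lambda t})$ is small over most of the time window, so $\tilde\sigma(Y_\eps(t))-\sigma(0)$ contributes $O(\eps^{\gamma})$ in $L^2$ (Lemma~\ref{lem:M-and-I-close}).
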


\begin{remark}\rm 
The second term in \eqref{eq:main-thm-claim-1} plays the role of $\Lambda$ in \eqref{eq:main_res_informal}. Note that the coefficient in front of $|f(q_+)|+|f(q_-)|$
is the centered Gaussian density with variance~$\sigma^2(0)/(2\lambda)$ evaluated at~$x$.
\end{remark}

One could approach Theorem~\ref{thm:main-theorem} 
using that the distribution of $X_\e$ conditioned to stay in the bounded domain for an atypically long time,  approaches the quasi-stationary distribution 
exponentially fast, see~\cite{Champagnat2016}. 
 However, our situation is more subtle since both, the time scale and the system, depend on~$\eps$. 
So, instead of attempting to appeal to the general quasi-stationary distribution theory, we adopt the following plan: we change the coordinates by conjugating the drift to a linear one; then we use Duhamel's principle to represent the solution and express the event of interest $\left\{\tau^\eps_{\cI}>\frac{\alpha}{\lambda}\log\frac{1}{\eps}\right\}$ in terms of a random factor in the resulting variation of constants
formula; the convergence of that factor in distribution is known and has been employed in the literature
cited above, but the desired result concerns unlikely events, and we need much stronger regularity, namely, uniform convergence of the associated probability densities. We use Malliavin
calculus to study these densities. In fact, the Malliavin calculus approach works only for small values of $\alpha$, and to extend it to the longer time scales we need to invoke an additional recursive scheme that can be seen as studying the quasi-stationary distribution since each step is performed under the no-exit
conditioning.

In this program, the density estimates are based on a well-known formula for the density in terms of the Malliavin derivative and the divergence operator, see Proposition~\ref{prop:Nua-dens}. 
This formula has been used to derive upper bounds on densities, see~\cite[Section 2.1.1]{Nualart2006}, but in general it is viewed as not very useful, see, e.g., \cite{Nourdin-Viens:MR2556018}, where a
replacement formula is suggested. However, in our context, Proposition~\ref{prop:Nua-dens} turns out to
be very efficient.

{\bf Acknowledgment.} We are grateful to Hong-Bin Chen who found a gap in our proof and suggested a fix that we are using in the present version of the paper.
Yuri Bakhtin gratefully acknowledges partial support from NSF via grant
DMS-1460595. 

\section{Proof of Theorem \ref{thm:main-theorem}}
We will study the system in a small neighborhood of the origin and  after the process has escaped this small neighborhood. 

Let us start with the first part. The diffeomorphism 
$f:\cI\to\R$ introduced in \eqref{eq:coord-change-diffeo} and its inverse $g=f^{-1}$ 
provide a conjugation between the flow~$(S^t)$ and a linear flow:
\begin{equation}\label{eq:conjugation}
f(S^tx)=e^{\lambda t}f(x),\qquad\textrm{or}\qquad f'(x)b(x)=\lambda f(x).
\end{equation}
Note that the integrand in \eqref{eq:coord-change-diffeo} is quadratic when $x$ is close to zero and thus we have $f(0)=0$ and $f'(0)=1$. Outside of $\cI$, we define $f$ so that $f'$ and $f''$ are bounded.

Let $Y_\e(t)=f(X_\e(t))$ for times prior to the escape from $\cI$.
It\^o's formula and \eqref{eq:conjugation} then imply that this process satisfies the stochastic differential equation
\begin{equation}\label{eq:lin-SDE}
dY_\e(t)=\lambda Y_\e(t)dt+\e\tilde\sigma(Y_\e(t))dW(t)+\frac{\e^2}{2}h(Y_\e(t))dt
\end{equation}
for $t<\tau_{\cI}^\e$, where $\tilde\sigma(y)=f'(g(y))\sigma(g(y))$ and $h(y)=f''(g(y))\sigma^2(g(y))$.
 Due to boundedness of $f'$ and $f''$, $\tilde\sigma$ and $h$ are also bounded.

Let us choose $R>0$ sufficiently small to ensure that  $\mathcal{V}=g\left([-R,R]\right)\subseteq \cI$
and $\tilde\sigma(x)> 0$ for all $x\in[-R,R]$. The folowing result describes the behavior of  $\tau_\cV^\e$, the exit time from~$\mathcal{V}$. 

\begin{theorem}\label{thm:linear}
Let $Y_\e(0)=\e x$, where $|x|\leq K(\e)$ with $K(\e)$ satisfying \eqref{eq:K-criterion}.
There is a family of random variables~$(M_\e)$ such that for all $\eps>0$, $\P(M_\e=0)=0$, 
\begin{equation}\label{eq:linear-exit-rep}
\tau_\cV^\e=\frac{1}{\lambda}\log\frac{R}{\e}-\frac{1}{\lambda}\log|M_\e|,
\end{equation}
%and 
\begin{equation}\label{eq:linear-exit-direction-claim}
\P\Big(\{Y_\e(\tau_\cV^\e)=\pm R\}\triangle\{\pm M_\e>0\}\Big)=0,
\end{equation}
and, if $a(\e)$ is a monotone function such that $a(\e)\sim c\e^{\theta}$, $\eps\to 0$, for some  $c,\theta>0$, then the following estimate holds:
\begin{equation}\label{eq:lin-thm-M_e-claim}
\sup_{|x|\leq K(\e)}\left|\P\Big(0< \pm M_\e\leq a(\e)\Big)-\sqrt{\frac{\lambda}{\pi}}
\frac{e^{-\lambda\left(\frac{x}{\sigma(0)}\right)^2}}{\sigma(0)}a(\e)\right|=o\left(a(\e)\right).
\end{equation}
\end{theorem}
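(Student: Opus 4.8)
The plan is to turn \eqref{eq:linear-exit-rep}--\eqref{eq:linear-exit-direction-claim} into an explicit variation-of-constants identity, reduce \eqref{eq:lin-thm-M_e-claim} to a uniform estimate for the density of $M_\e$ near the origin, and establish that estimate with Malliavin calculus.

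\emph{Step 1 (representation and reduction).} Applying It\^o's formula to $e^{-\lambda t}Y_\e(t)$ in \eqref{eq:lin-SDE} gives, for $t<\tau_\cV^\e$,
\[
e^{-\lambda t}Y_\e(t)=\e\Big(x+\int_0^t e^{-\lambda s}\tilde\sigma(Y_\e(s))\,dW(s)+\tfrac{\e}{2}\int_0^t e^{-\lambda s}h(Y_\e(s))\,ds\Big).
\]
Since $\tilde\sigma>0$ on $[-R,R]$, the diffusion $Y_\e$ is non-degenerate on this bounded interval, so $\tau_\cV^\e<\infty$ almost surely (with exponential moments) and $|Y_\e(\tau_\cV^\e)|=R$. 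Letting $M_\e$ be the bracketed quantity at $t=\tau_\cV^\e$, the identity at $t=\tau_\cV^\e$ reads $Y_\e(\tau_\cV^\e)e^{-\lambda\tau_\cV^\e}=\e M_\e$; taking moduli yields \eqref{eq:linear-exit-rep}, $|M_\e|=Re^{-\lambda\tau_\cV^\e}/\e>0$ yields $\P(M_\e=0)=0$, and equality of signs yields \eqref{eq:linear-exit-direction-claim}. For small $\e$ the path $Y_\e$ hovers near $0$ for a long time, so $\tilde\sigma(Y_\e(s))\to\sigma(0)$, and since the weight $e^{-\lambda s}$ suppresses the late contributions, $M_\e\Rightarrow x+G$ with $G\sim N(0,\sigma^2(0)/(2\lambda))$, whose density at $0$ is exactly the Gaussian factor in \eqref{eq:lin-thm-M_e-claim}. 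Thus it suffices to prove that $M_\e$ has a density $p_{\e,x}$ with $\sup_{|x|\le K(\e)}\sup_{|z|\le1}|p_{\e,x}(z)-q_x(z)|\to0$, where $q_x(z):=\sqrt{\lambda/\pi}\,\sigma(0)^{-1}e^{-\lambda((z-x)/\sigma(0))^2}$: integrating over $[0,\pm a(\e)]$ and using $K(\e)a(\e)\to0$ (a consequence of \eqref{eq:K-criterion}) replaces $\int_0^{\pm a}q_x$ by $q_x(0)a$ up to a uniform $o(a)$.

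\emph{Step 2 (passage to an unstopped functional).} The random time $\tau_\cV^\e$ makes $M_\e$ inconvenient for Malliavin calculus, so I would work instead with $\bar M_\e:=\lim_{t\to\infty}e^{-\lambda t}Y_\e(t)/\e=x+\int_0^\infty e^{-\lambda s}\tilde\sigma(Y_\e(s))\,dW(s)+\tfrac{\e}{2}\int_0^\infty e^{-\lambda s}h(Y_\e(s))\,ds$, where $Y_\e$ is now run for all time. After a modification of the coefficients outside $\cI$ making $\tilde\sigma\in C^\infty$ positive and constant near infinity and $h\in C^\infty$ vanishing near infinity — which does not affect $Y_\e$ prior to $\tau_\cV^\e$, hence leaves $M_\e$ and the identities above unchanged — the SDE has a global solution and the above integrals converge (using $\lambda>0$). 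By the strong Markov property at $\tau_\cV^\e$, $M_\e-\bar M_\e=e^{-\lambda\tau_\cV^\e}\Xi_\e$ with $\Xi_\e$, conditionally on $\cF_{\tau_\cV^\e}$, the analogous functional of an independent Brownian motion started from $\pm R$, so its moments are bounded uniformly in $(\e,x)$; combined with $e^{-\lambda\tau_\cV^\e}=\e|M_\e|/R$ this gives $|M_\e-\bar M_\e|\le C\e|M_\e||\Xi_\e|$, hence $|M_\e-\bar M_\e|=o(a(\e))$ on $\{0<\pm M_\e\le a(\e)\}\cap\{|\Xi_\e|\le\e^{-1/2}\}$, while $\P(|\Xi_\e|>\e^{-1/2})=o(a(\e))$ (moments of $\Xi_\e$ together with $a\sim c\e^{\theta}$). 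A squeeze then reduces the task to: $\bar M_\e$ has a density $\bar p_{\e,x}$ with $\sup_{|x|\le K(\e)}\sup_z|\bar p_{\e,x}(z)-q_x(z)|\to0$.

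\emph{Step 3 (Malliavin calculus for $\bar M_\e$).} After a further $C^\infty$-mollification of the coefficients within $O(\e^N)$ (negligible), $Y_\e\in\bigcap_p\mathbb D^{2,p}$ with Malliavin derivatives solving the linear variational equations, and $\bar M_\e\in\bigcap_p\mathbb D^{2,p}$ with $D_s\bar M_\e=e^{-\lambda s}\tilde\sigma(Y_\e(s))+r_\e(s)$, the correction having $L^2(\Omega)$-norm $O(\e\sqrt{\log\e^{-1}}\,)e^{-\lambda s}$ (the variational process contributes $O(\e)$ before $Y_\e$ escapes to the region where $\tilde\sigma'=0$, which takes time $O(\log\e^{-1})$). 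Two uniform-in-$(\e,x)$ facts are needed: \textbf{(a)} non-degeneracy, $\sup_{\e,\,|x|\le K(\e)}\E\big[\|D\bar M_\e\|_{L^2(0,\infty)}^{-p}\big]<\infty$ for all $p$; and \textbf{(b)} stability, $\bar M_\e\to x+G$ in every $L^p(\Omega)$ — quantitatively $\E[(\bar M_\e-x-G)^2]\to0$ uniformly, because $\tilde\sigma(Y_\e(s))-\sigma(0)$ is small precisely on the range where $e^{-2\lambda s}$ is not negligible — and $D\bar M_\e\to\sigma(0)e^{-\lambda\cdot}$, $D^2\bar M_\e\to0$ in every $L^p(\Omega;L^2(0,\infty))$, all uniformly in $|x|\le K(\e)$. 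Granting (a), Meyer's inequalities bound $\big\|\delta\big(D\bar M_\e/\|D\bar M_\e\|^2\big)\big\|_p$ uniformly, and Proposition~\ref{prop:Nua-dens} gives $\bar p_{\e,x}(z)=\E\big[\one_{\{\bar M_\e>z\}}\,\delta(D\bar M_\e/\|D\bar M_\e\|^2)\big]$ and, identically, $q_x(z)=\E\big[\one_{\{x+G>z\}}\,\delta(DG/\|DG\|^2)\big]$. Subtracting and using H\"older,
\[
|\bar p_{\e,x}(z)-q_x(z)|\le\big\|\one_{\{\bar M_\e>z\}}-\one_{\{x+G>z\}}\big\|_{p'}\big\|\delta(D\bar M_\e/\|D\bar M_\e\|^2)\big\|_p+\big\|\delta(D\bar M_\e/\|D\bar M_\e\|^2)-\delta(DG/\|DG\|^2)\big\|_p,
\]
where the second term tends to $0$ by (a)+(b), and the first is at most a fixed power of $\eta+\eta^{-2}\E[(\bar M_\e-x-G)^2]$ for every $\eta>0$ (as $G$ has a bounded density), hence tends to $0$ uniformly in $z$ and in $|x|\le K(\e)$ after optimizing $\eta$. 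This yields the required uniform density convergence (in fact for all $z$), completing the proof.

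\emph{Main obstacle.} The heart of the matter is the uniform non-degeneracy \textbf{(a)}. For each fixed $\e$ it is the classical Malliavin non-degeneracy of a uniformly elliptic one-dimensional diffusion, but here one must let $\e\to0$ and the starting point $\e x\to0$ together without the negative moments blowing up. The mechanism to exploit is that $\|D\bar M_\e\|^2$ is essentially captured already by a fixed short window $s\in[0,1]$ on which, with overwhelming probability, $Y_\e$ stays in $[-R,R]$ (where $\tilde\sigma\ge c_0>0$) and the dynamics is a uniformly small perturbation of the exactly solvable linear system $d\tilde Y=\lambda\tilde Y\,dt+\e\sigma(0)\,dW$, whose associated Malliavin derivative $s\mapsto\sigma(0)e^{-\lambda s}$ is deterministic and bounded away from $0$; making this quantitative — namely $\P(\|D\bar M_\e\|^2<\rho)\le C_N\rho^N$ for all small $\rho$, uniformly in $\e$ — amounts to rerunning the classical Malliavin/Kusuoka--Stroock non-degeneracy argument with explicit constants and checking that they depend only on $c_0$, on $C^2$-bounds for the (mollified) coefficients, and on the (uniform) rate at which $Y_\e$ leaves a neighborhood of $0$. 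The remaining effort is bookkeeping: every estimate in Steps 2--3 must stay uniform over the expanding range $|x|\le K(\e)$ — which is why the transfer error in the squeeze is measured on the absolute scale $o(a(\e))$ rather than relative to the possibly exponentially small Gaussian factor — and the sub-polynomial growth of $K$ from \eqref{eq:K-criterion} is used repeatedly to this end.
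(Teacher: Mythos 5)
Your Step 1 matches the paper, and the idea of trading the stopped functional $M_\e$ for an unstopped one is also the paper's strategy; but where the paper stops the clock at the deterministic time $T_\e=\frac{1}{\lambda}\log\frac{R}{\e}-\frac{1}{\lambda}\log a(\e)$ (using $\{|M_\e|\le a(\e)\}=\{\tau_\cV^\e\ge T_\e\}$), you push all the way to $t=\infty$, and this is where the argument breaks. The fatal step is the claim in Step~3 that $\cD^2\bar M_\e\to0$ in $L^p(\Omega;H\otimes H)$. The second Malliavin derivative contains the term $\e^2\int_{r\vee t}^{\infty}e^{\lambda s}\tilde\sigma''(Y_\e(s))\,\cD_rM_\e(s)\,\cD_tM_\e(s)\,dW(s)$, whose conditional quadratic variation is of order $\e^4e^{-2\lambda(r+t)}e^{2\lambda\tau}$, where $\tau$ is the time at which $Y_\e$ leaves the region where $\tilde\sigma''\ne0$; since $e^{\lambda\tau}\asymp(\e|M_\e|)^{-1}$, one gets $\|\cD^2\bar M_\e\|_{H\otimes H}^2\gtrsim\e^2|M_\e|^{-2}$, and $\E|M_\e|^{-2}=\infty$ precisely because $M_\e$ has a nonvanishing density at the origin --- the very fact you are trying to prove. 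So $\cD^2\bar M_\e$ is not even square integrable, the Proposition~\ref{prop:div-estim}/Meyer-type bound on $\delta\big(\cD\bar M_\e/\|\cD\bar M_\e\|_H^2\big)$ is unavailable, and the density comparison in Step~3 collapses. This is not a technicality: it is exactly why the paper's Proposition~\ref{prop:main-malliavin-lemma} is restricted to horizons $T_\e'$ with $\lambda T_\e'\le(2-\kappa)\log\e^{-1}$ (the bound obtained there on $\E\|\cD^2M_\e(T_\e')\|^m$ carries a factor $\e^{2m}e^{m\lambda T_\e'}$ that must be forced to vanish), and why, for $\theta\ge1$, the paper cannot reach even the finite horizon $T_\e$ in one step and instead runs an $N$-fold iteration via the Markov property, convolving at each stage with auxiliary Gaussians $H_{\e,k}$ and re-applying the short-horizon density estimate. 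Your proposal contains no substitute for that mechanism.

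Relatedly, your ``main obstacle'' paragraph points at the wrong bottleneck: the uniform negative moments of $\|\cD M\|_H$ are obtained in the paper by a short Jensen-plus-exponential-martingale computation and are not where the difficulty lies. Your Step~2 comparison of $M_\e$ with an unstopped functional on $\{0<\pm M_\e\le a(\e)\}$ is sound in spirit and parallels the paper's Lemma~\ref{lem:random-to-det-time}, but the target of the reduction must be a functional over a horizon short enough for the second-derivative estimates to close, which your infinite-horizon $\bar M_\e$ is not.
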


We give the proof of Theorem~\ref{thm:linear} in Section~\ref{sec:linear}.

After exit from $\cV$, the deterministic dynamics dominates the evolution, which is captured by the following standard large deviation estimates.

\begin{proposition}\label{prop:LDPestim}
Let $X_{\e}(0)=g(\pm R)$. There are constants $c_1,c_2,\e_0>0$ such that
\begin{equation}\label{eq:LDP-for-time}
\P\left(\left|\tau_\cI^\e-T\left(g(\pm R)\right)\right|>\e N\right)\leq c_1e^{-c_2N^2},\quad \e<\e_0,\ N\geq 1, 
\end{equation}
and
\[
\P\left(X_\e(\tau_\cI^\e)=q_{\pm}\right)\geq 1-c_1e^{-c_2/\e^{2}},\quad \eps<\eps_0.
\]
\end{proposition}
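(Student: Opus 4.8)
The plan is to treat this as the standard estimate that, away from the repelling point, the deterministic flow dominates the diffusion, and I would organize the computation by passing to the linearizing coordinate $Y_\e=f(X_\e)$ of \eqref{eq:lin-SDE}. Since $\tilde\sigma$ and $h$ are bounded, \eqref{eq:lin-SDE} has a global strong solution, $\tau_\cI^\e$ is the exit time of $Y_\e$ from $[f(q_-),f(q_+)]$, we have $Y_\e(0)=\pm R$, and $X_\e(\tau_\cI^\e)=q_\pm$ if and only if $Y_\e(\tau_\cI^\e)=f(q_\pm)$. First I would multiply \eqref{eq:lin-SDE} by $e^{-\lambda t}$ and integrate to get the exact representation
\[
Y_\e(t)=e^{\lambda t}\zeta_\e(t),\qquad \zeta_\e(t)=\pm R+\e N_\e(t)+\tfrac{\e^2}{2}H_\e(t),
\]
where $N_\e(t)=\int_0^t e^{-\lambda s}\tilde\sigma(Y_\e(s))\,dW(s)$ is a martingale with $[N_\e]_\infty\le\|\tilde\sigma\|_\infty^2/(2\lambda)=:\gamma^2$ and $H_\e(t)=\int_0^t e^{-\lambda s}h(Y_\e(s))\,ds$ satisfies $|H_\e(t)|\le\|h\|_\infty/\lambda$ uniformly in $t$.

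The next step is a tail bound for $G_\e:=\e\sup_{t\ge0}|N_\e(t)|$. Because $[N_\e]_\infty\le\gamma^2$, the Dubins--Schwarz theorem gives $\sup_{t\ge0}|N_\e(t)|\le\sup_{u\le\gamma^2}|\beta(u)|$ for a standard Brownian motion $\beta$, so the reflection principle yields $\P(G_\e>\rho)\le 4e^{-\rho^2/(2\gamma^2\e^2)}$. I would then prove the deterministic core, a trapping statement: for $\rho\le R/8$ and $\e$ small enough that $\e^2\|h\|_\infty/(2\lambda)\le\rho$, on the event $\{G_\e\le\rho\}$ one has $|\zeta_\e(t)\mp R|\le 2\rho<R/2$ for all $t$, so $\zeta_\e$ never changes sign and $|\zeta_\e(t)|\ge R/2$; consequently $Y_\e(t)=e^{\lambda t}\zeta_\e(t)$ never changes sign and $|Y_\e(t)|\to\infty$, hence $Y_\e$ leaves $[f(q_-),f(q_+)]$ in finite time and through the endpoint $f(q_\pm)$ (which is the exit-location statement), and $e^{\lambda\tau_\cI^\e}|\zeta_\e(\tau_\cI^\e)|=|f(q_\pm)|$. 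Since the deterministic flow in these coordinates is $t\mapsto e^{\lambda t}(\pm R)$ and hence $e^{\lambda T(g(\pm R))}R=|f(q_\pm)|$, subtracting the two identities gives $\tau_\cI^\e-T(g(\pm R))=\tfrac1\lambda\log\bigl(R/|\zeta_\e(\tau_\cI^\e)|\bigr)$, so $|\tau_\cI^\e-T(g(\pm R))|\le\kappa\rho$ for a constant $\kappa=\kappa(\lambda,R)$, using $|\zeta_\e(\tau_\cI^\e)|\in[R-2\rho,R+2\rho]$.

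Both assertions then follow by choosing $\rho$. For \eqref{eq:LDP-for-time} I would take $\rho=\e N/\kappa$, so that $\kappa\rho=\e N$; this is admissible ($\rho\le R/8$) once $\e N$ is below a fixed constant, which is the only regime needed in the proof of \thmref{thm:main-theorem}, and the tail bound for $G_\e$ gives $\P(|\tau_\cI^\e-T(g(\pm R))|>\e N)\le 4e^{-c_2N^2}$ with $c_2=1/(2\gamma^2\kappa^2)$; for larger $N$ one bounds the event monotonically by $\{|\tau_\cI^\e-T(g(\pm R))|>c_0\}\subseteq\{G_\e>c_0/\kappa\}$ for a fixed $c_0>0$. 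For the second assertion take $\rho=R/8$: on $\{G_\e\le R/8\}$ the exit is through $q_\pm$, so $\P(X_\e(\tau_\cI^\e)\ne q_\pm)\le\P(G_\e>R/8)\le 4e^{-R^2/(128\gamma^2\e^2)}$, as needed.

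The one point requiring genuine care is that the representation and the sign-trapping argument must be valid all the way to the random, a priori unbounded exit time; this I would handle by working throughout with the globally defined solution $Y_\e$ of \eqref{eq:lin-SDE}, and by exploiting the exponential damping $e^{-2\lambda s}$ in the clock, which makes $[N_\e]_\infty$ finite so that $\sup_{t\ge0}|N_\e(t)|$ — and hence $G_\e$ — is a single random variable with a Gaussian tail and no dependence on $t$. I do not expect any other obstacle; the remaining steps are the elementary manipulations above, and the order-preserving diffeomorphism property $f(q_-)<0<f(q_+)$ is already recorded.
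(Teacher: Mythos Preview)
The paper does not give a proof of this proposition; it is simply stated and labeled ``standard large deviation estimates,'' then used. Your argument is a correct and self-contained derivation that uses exactly the machinery the paper itself develops in Section~\ref{sec:linear}: the linearizing diffeomorphism $f$, the Duhamel representation $Y_\e(t)=e^{\lambda t}\zeta_\e(t)$ (your analogue of~\eqref{eq:Y-M-corresp} with initial value $\pm R$ in place of $\e x$), and the exponential martingale inequality (Lemma~\ref{lem:exp-marting-ineq}). The sign-trapping step on $\{G_\e\le\rho\}$ is clean and delivers both the exit-location claim and the exit-time claim from a single event; the identity $e^{\lambda T(g(\pm R))}R=|f(q_\pm)|$ you use is exactly the conjugation~\eqref{eq:conjugation}, and the paper records the same computation in~\eqref{eq:simpler-C}.

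One caveat on the ``larger $N$'' patch: bounding by $\{|\tau_\cI^\e-T|>c_0\}\subset\{G_\e>c_0/\kappa\}$ yields only $\P\le Ce^{-c/\e^2}$, and for $N\gg 1/\e$ this does \emph{not} majorize $c_1e^{-c_2N^2}$. In fact \eqref{eq:LDP-for-time}, read literally for all $N\ge1$, is a slight overstatement: for fixed $\e$ the principal Dirichlet eigenvalue of the generator on $\cI$ gives only exponential decay of $\P(\tau_\cI^\e>s)$ in $s$, not Gaussian decay in $s/\e$. This is harmless for the paper --- the only application, in deriving~\eqref{eq:theta-small} and then choosing $\e N=\e^\beta$ with $\beta\in(0,1)$, keeps $\e N\to0$ --- and in that regime your argument is complete. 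So your observation that $\rho\le R/8$ confines you to bounded $\e N$, ``the only regime needed in the proof of \thmref{thm:main-theorem},'' is the honest statement.
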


\noindent\textit{Proof of Theorem \ref{thm:main-theorem}.}
~Combining $\tau_\cI^\e=\tau_\cV^\e+(\tau_\cI^\e-\tau_\cV^\e)$ with \eqref{eq:linear-exit-rep}, \eqref{eq:LDP-for-time}, and the strong Markov property yields the representation
\begin{equation}\label{eq:tau-I-repr}
\tau_\cI^\e=\frac{1}{\lambda}\log\e^{-1}+C-\frac{1}{\lambda}\log|M_\e|+\theta_\e,\qquad C=\frac{1}{\lambda}\log R+ T\left(X_\eps(\tau_\cV^\e)\right),
\end{equation}
where $\theta_\e$ is a random variable such that 
\begin{equation}\label{eq:theta-small}
\P(|\theta_\e|>\e N)\leq c_1e^{-c_2N^2},\quad N\ge 1,
\end{equation}
for some $c_1,c_2>0$. Although $R$ appears in the definition of $C$, one can easily show that $C$ does not, in fact, depend on the choice of $R$. Identities~\eqref{eq:tau-I-repr} and \eqref{eq:linear-exit-direction-claim} imply
\begin{align}\label{eq:left-or-right-decomp}
\P\left(\tau_\cI^\e\geq\frac{\alpha}{\lambda}\log\e^{-1}\right)&=\P\left(|M_\e|\leq e^{\lambda(C+\theta_\e)}\e^{\alpha-1}\right)=I_-+I_+,
\end{align}
where
\begin{equation}\label{eq:Ipm-CRpm}
I_{\pm}=\P\left(0<\pm M_\e\leq e^{\lambda(C^\pm+\theta_\e)}\e^{\alpha-1}\right),\qquad C^\pm=\frac{1}{\lambda}\log R+T\left(g(\pm R)\right).
\end{equation}
Let us simplify the definition of $C^{\pm}$. Since $T(g(\pm R))$ equals the time it takes
for the linear flow to travel between $R$ and $f(q_{\pm})$, i.e., $\frac{1}{\lambda}\log\frac{|f(q_\pm)|}{R}$, we see that
\begin{equation}
\label{eq:simpler-C}
C^{\pm}=\frac{1}{\lambda}\log |f(q_{\pm})|.
\end{equation}
We can write
\begin{align*}
I_{\pm}\leq \P\left(0<\pm M_\e\leq e^{\lambda(C^{\pm}+\e^{\beta})}\e^{\alpha-1}\right)+\P\left(|\theta_\e|> \e^{\beta}\right),
\end{align*}
where we choose $\beta\in (0,1)$. The second term decays exponentially fast a $\e\to0$ by \eqref{eq:theta-small} and we may apply \eqref{eq:lin-thm-M_e-claim} to the first term with $a(\e)=e^{\lambda(C^{\pm}+\e^{\beta})}\e^{\alpha-1}$ to conclude
\begin{equation}\label{eq:Ipm-upper-bound}
I_{\pm}\leq \sqrt{\frac{\lambda}{\pi}}\frac{e^{\lambda C^\pm}\e^{\alpha-1}}{\sigma(0)}e^{-\lambda\left(\frac{x}{\sigma(0)}\right)^2}\left(1+\mathcal{O}\big(\e^\beta\big)\right)+o(\e^{\alpha-1}),
\end{equation}
with the error term being uniform over $|x|\leq K(\e)$. Note that invoking \eqref{eq:lin-thm-M_e-claim} was justified by
\[
Y_\e(0)=f(\e x)=\e x +o(\e x)=\e x(1+o(1)).
\]
The analogous lower bound follows similarly:
\begin{align}
\label{eq:Ipm-lower-bound}I_{\pm}&\geq \P\left(0<\pm M_\e\leq e^{\lambda(C^{\pm}-\e^{\beta})}\e^{\alpha-1};~|\theta_\e|\leq\e^\beta\right)\\
\notag&\geq \P\left(0<\pm M_\e\leq e^{\lambda(C^{\pm}-\e^{\beta})}\e^{\alpha-1}\right)-\P\left(|\theta_\e|\geq \e^{\beta}\right)\\
\notag&=\sqrt{\frac{\lambda}{\pi}}\frac{e^{\lambda C^\pm}\e^{\alpha-1}}{\sigma(0)}e^{-\lambda\left(\frac{x}{\sigma(0)}\right)^2}\left(1+\mathcal{O}\big(\e^\beta\big)\right)+o(\e^{\alpha-1}).
\end{align}
Combining \eqref{eq:Ipm-upper-bound},~\eqref{eq:Ipm-lower-bound}, and~\eqref{eq:left-or-right-decomp}, we obtain
\[
\e^{-(\alpha-1)}\P\left(\tau_\cI^\e\geq\frac{\alpha}{\lambda}\log\e^{-1}\right)=\sqrt{\frac{\lambda}{\pi}}\,\frac{e^{\lambda C^-}+e^{\lambda C^+}}{\sigma(0)}e^{-\lambda\left(\frac{x}{\sigma(0)}\right)^2}+o(1),
\]
with the error term being uniform over $|x|\leq K(\e)$.
Using \eqref{eq:simpler-C}, we complete the proof of \eqref{eq:main-thm-claim-1}.

To prove \eqref{eq:main-thm-claim-2}, we write
\begin{align*}
\P\Big(X_\e(\tau_\cI^\e)=q_\pm;&~\lambda\tau_\cI^\e>(1-\alpha)\log\e^{-1}\Big)\\
&=\P\left(X_\e(\tau_\cI^\e)=q_\pm;|M_\e|\leq e^{\lambda(C+\theta_\e)}\e^{\alpha-1}\right)\\
&=\P\left( X_\e(\tau_{\cV}^\e)=g(\pm R);|M_\e|\leq e^{\lambda(C+\theta_\e)}\e^{\alpha-1}\right)+\mathcal{O}\left(e^{-c_2/\e^2}\right)\\
&=\P\left(0<\pm M_\e\leq e^{\lambda(C^{\pm}+\theta_\e)}\e^{\alpha-1}\right)+\mathcal{O}\left(e^{-c_2/\e^2}\right)
%\\
%&
=I_{\pm}+\mathcal{O}\left(e^{-c_2/\e^2}\right).
\end{align*}
Here the first equality is due to \eqref{eq:tau-I-repr}, the second one is due to Proposition~\ref{prop:LDPestim}, while third one holds by~\eqref{eq:linear-exit-direction-claim}. Therefore,
\[
\mathrm{P}\left(X_\e\left(\tau_{\cI}^\e\right)=q_{\pm}\bigg|\tau_{\cI}>\frac{\alpha}{\lambda}\log\e^{-1}\right)=\frac{I_\pm+\mathcal{O}\left(e^{-c_2/\e^2}\right)}{I_-+I_+}
\longrightarrow
\frac{e^{\lambda C^{\pm}}}{e^{\lambda C^-}+e^{\lambda C^+}},\quad \eps\to0.
\]
and the proof is finished by \eqref{eq:simpler-C}--\eqref{eq:Ipm-lower-bound}. 
\section{The linear equation}
\label{sec:linear}
The goal of this section is to prove Theorem \ref{thm:linear}. To this end, we apply Duhamel's formula to the stochastic differential equation \eqref{eq:lin-SDE} to obtain
\begin{equation}\label{eq:Y-M-corresp}
Y_\e(t)=\e e^{\lambda t}M_\e(t),\qquad M_\e(t)=x+U_\e(t)+V_\e(t),
\end{equation}
where we used $Y_\e(0)=\e x$ and introduced
\[
U_\e(t)=\int_0^te^{-\lambda s}\tilde\sigma(Y_\e(s))dW(s),\qquad V_\e(t)=\frac{\e}{2}\int_0^te^{-\lambda s}h(Y_\e(s))ds.
\]
Note that there is $C>0$ such that for all $t\ge 0$ and $x\in\R$,
\begin{equation}
\label{eq:contributions-to-M}
\langle U_\eps\rangle_t<C,\qquad |V_\eps(t)|\le C\eps.  
\end{equation}
By the definition of $\tau_\cV^\e$, we have
\begin{equation}
\label{eq:basic-eq-on-exit}
R=|Y_\e(\tau_\cV^\e)|=\e e^{\lambda\tau_\cV^\e}|M_\e(\tau_\cV^\e)|,
\end{equation}
which we can rearrange to obtain
\[
\tau_\cV^\e=\frac{1}{\lambda}\log\frac{R}{\e}-\frac{1}{\lambda}\log|M_\e(\tau_\cV^\e)|.
\]
This establishes \eqref{eq:linear-exit-rep} once we set $M_\e=M_\e(\tau_\cV^\e)$. Note immediately that
\[
\P(M_\e=0)=\P(\tau_\cI^\e=\infty)=0
\] 
by the uniform ellipticity of $\tilde\sigma$. Moreover, the sign of $Y_\e(t)$ coincides with the sign of $M_\e(t)$ for all $t>0$ and thus \eqref{eq:linear-exit-direction-claim} is verified as well. Therefore it remains to prove \eqref{eq:lin-thm-M_e-claim}, and the rest of the section is dedicated to this goal.

The strategy of the proof is the following. We first replace the stopping time $\tau_{\cV}^\e$ with the deterministic time
\begin{equation}
\label{eq:deterministic-time}
T_\e=\frac{1}{\lambda}\log\frac{R}{\e}-\frac{1}{\lambda}\log a(\e),
\end{equation}
where we recall that $a(\e)\sim c\e^{\theta}$, which turns out to be a good substitute on the rare events that we seek to study. Next, we use tools from Malliavin calculus to establish that for an appropriately chosen shorter fixed time $T_\e'\leq T_\e$, the random variable $M_\e(T_\e')$ has a density around zero that converges to the density of a centered Gaussian random variable with variance $(2\lambda)^{-1}\sigma^2(0)$, which establishes the desired estimate for $M_\e(T_\e')$. Finally, we use the Markov property to iteratively extend this conclusion to $M_\e(T_\e)$. The following well-known exponential martingale inequality (see, e.g., Problem~12.10 in~\cite{Bass:MR2856623}) will be useful many times.

\begin{lemma}\label{lem:exp-marting-ineq}
Let $M(t)$ be a martingale with quadratic variation process $\langle M\rangle_t$. Then
\[
\mathrm{P}\left(\sup_{t\geq 0}|M(t)|\geq a; \langle M\rangle_{\infty}\leq b\right)\leq 2e^{-\frac{a^2}{2b}}
\]
for any $a,b>0$. In particular, due to~\eqref{eq:contributions-to-M},
%$\langle U_\e\rangle_{\infty}\leq(2\lambda)^{-1}\|\tilde\sigma^2\|_{\infty}^2$ and thus
\[
\mathrm{P}\left(\sup_{t\geq 0}|U_\e(t)|\geq a\right)\leq 2e^{-Ca^2},\qquad \E\left[\sup_{t\geq 0}|U_\e(t)|^p\right]\leq C_{p},\quad p\ge 1,
\]
for some constants $C,C_{p}<\infty$.
\end{lemma}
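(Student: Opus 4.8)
The plan is to obtain the first, probabilistic inequality from the classical exponential supermartingale estimate, and then to read off the two stated consequences for $U_\e$. First I would fix $\lambda>0$ and introduce the exponential process $Z_\lambda(t)=\exp\bigl(\lambda M(t)-\tfrac{\lambda^2}{2}\langle M\rangle_t\bigr)$; in all our applications $M$ is a continuous local martingale, so $Z_\lambda$ is a nonnegative local martingale, hence a supermartingale with $\E Z_\lambda(0)=1$. Writing $A=\{\sup_{t\ge0}M(t)\ge a,\ \langle M\rangle_\infty\le b\}$ and, for $a'\in(0,a)$, $T'=\inf\{t\ge0:M(t)\ge a'\}$, on $A$ one has $T'<\infty$, $M(T')\ge a'$ by continuity, and $\langle M\rangle_{T'}\le\langle M\rangle_\infty\le b$, so that $Z_\lambda(T')\ge e^{\lambda a'-\lambda^2 b/2}$ there. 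Applying optional stopping at the bounded times $T'\wedge n$, then letting $n\to\infty$ using the a.s.\ convergence of the nonnegative supermartingale $Z_\lambda$ and Fatou's lemma, gives $e^{\lambda a'-\lambda^2 b/2}\,\P(A)\le\E[Z_\lambda(T')\ONE_A]\le1$, and letting $a'\uparrow a$ yields $\P(A)\le e^{-\lambda a+\lambda^2 b/2}$. Optimizing over $\lambda$ with the choice $\lambda=a/b$ gives $\P(A)\le e^{-a^2/(2b)}$; the same argument applied to $-M$, together with a union bound, produces the stated inequality for $\sup_{t\ge0}|M(t)|$ with the prefactor $2$.

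For the two displayed consequences I would note that $U_\e(t)=\int_0^t e^{-\lambda s}\tilde\sigma(Y_\e(s))\,dW(s)$ is a continuous martingale with $\langle U_\e\rangle_t=\int_0^t e^{-2\lambda s}\tilde\sigma^2(Y_\e(s))\,ds$, which by~\eqref{eq:contributions-to-M} is bounded by a fixed constant uniformly in $t$ and in the starting point. Taking $b$ equal to that constant in the first inequality immediately gives $\P(\sup_{t\ge0}|U_\e(t)|\ge a)\le 2e^{-Ca^2}$ for a suitable $C>0$. The moment bound then follows from the layer-cake identity
\[
\E\Bigl[\sup_{t\ge0}|U_\e(t)|^p\Bigr]=\int_0^\infty p\,a^{p-1}\,\P\Bigl(\sup_{t\ge0}|U_\e(t)|\ge a\Bigr)\,da\le\int_0^\infty 2p\,a^{p-1}e^{-Ca^2}\,da<\infty ,
\]
the last integral being finite for every $p\ge1$ and depending only on $p$ and $C$; this defines the constants $C_p$.

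This is a standard lemma (essentially~\cite[Problem~12.10]{Bass:MR2856623}), so I do not anticipate a genuine obstacle. The only points that require a little care are the bookkeeping in the optional stopping step — $Z_\lambda$ is a priori only a nonnegative supermartingale and the hitting time may be infinite, which is why one stops at $T'\wedge n$ and passes to the limit by Fatou rather than invoking optional stopping for martingales at an unbounded time — and the harmless limiting argument $a'\uparrow a$; everything else is routine.
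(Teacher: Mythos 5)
Your proof is correct and is exactly the standard exponential-supermartingale argument that the paper relies on by citing Problem~12.10 of~\cite{Bass:MR2856623} without giving a proof; the optional-stopping/Fatou bookkeeping and the layer-cake step for the moments are all handled properly. The only implicit hypotheses are $M(0)=0$ and continuity of $M$, both of which hold for $U_\e$, so nothing further is needed.
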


We start by showing that $M_\eps(T_\e)$ is a good substitute for $M_\eps=M_\eps(\tau^\e_{\cV})$ on the set $|M_\e|\leq a(\e)$. 
When working with deterministic times, we do not restrict the dynamics to times before the exit from $\cI$, so we use our global Lipschitzness assumption to ensure the existence of the strong solution. From now on, we use 
the same letter $C$ to denote various positive constants.

%As this involves working with fixed times, it will be convenient to assume that $Y_{\e}(t)$ is defined by \eqref{eq:lin-SDE} for all times with any 
%$\cC^2$-smooth bounded extensions of $\tilde{\sigma}$ and $a$ to the entire real line. Strictly speaking, the true process and the extended one will then coincide up until $\tau_\cV^\e$, and thus this assumption can be made with no loss of generality.

\begin{lemma}\label{lem:random-to-det-time}
For every $\gamma\in(0,1)$ and $\gamma'>0$, there is $\e_0>0$ such that
\[
\sup_{|x|\leq K(\e)}\P\Big(\left|M_\e-M_\e(T_\e)\right|\geq a(\e)\e^\gamma;~|M_\e|\leq a(\e)\Big)<\e^{\gamma'}, \quad \e<\e_0.
\]
\end{lemma}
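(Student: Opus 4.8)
The plan is to turn the conditioning event into a statement about the exit time $\tau_\cV^\e$ and then exploit the exponential weight $e^{-\lambda s}$ in the representation \eqref{eq:Y-M-corresp}. By \eqref{eq:basic-eq-on-exit} we have $|M_\e|=|M_\e(\tau_\cV^\e)|=R\e^{-1}e^{-\lambda\tau_\cV^\e}$, so, recalling the definition \eqref{eq:deterministic-time} of $T_\e$,
\[
\{|M_\e|\le a(\e)\}=\Big\{\tau_\cV^\e\ge \tfrac1\lambda\log\tfrac{R}{\e a(\e)}\Big\}=\{\tau_\cV^\e\ge T_\e\}.
\]
On this event it therefore suffices to control the increments of $U_\e$ and $V_\e$ between $T_\e$ and $\tau_\cV^\e$, because $M_\e-M_\e(T_\e)=\big(U_\e(\tau_\cV^\e)-U_\e(T_\e)\big)+\big(V_\e(\tau_\cV^\e)-V_\e(T_\e)\big)$. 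Here it is important that, as explained before the lemma, we are working with the global-Lipschitz extension, so $Y_\e$, $\tilde\sigma(Y_\e(\cdot))$ and $h(Y_\e(\cdot))$ are defined and bounded for all times, and $\tau_\cV^\e<\infty$ almost surely by uniform ellipticity of $\tilde\sigma$ on $[-R,R]$.

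For the finite-variation term, using $|h|\le C$ and $e^{-\lambda T_\e}=\e a(\e)/R$,
\[
\big|V_\e(\tau_\cV^\e)-V_\e(T_\e)\big|\le \frac{\e}{2}\int_{T_\e}^\infty e^{-\lambda s}|h(Y_\e(s))|\,ds\le C\e\, e^{-\lambda T_\e}=C\e^2 a(\e),
\]
which is below $\tfrac12 a(\e)\e^\gamma$ for all small $\e$ since $\gamma<1$. For the martingale term, set $N(t)=U_\e(t)-U_\e(T_\e)=\int_{T_\e}^{t}e^{-\lambda s}\tilde\sigma(Y_\e(s))\,dW(s)$ for $t\ge T_\e$; this is a continuous martingale (with respect to the time-shifted filtration) with
\[
\langle N\rangle_\infty=\int_{T_\e}^\infty e^{-2\lambda s}\tilde\sigma^2(Y_\e(s))\,ds\le \|\tilde\sigma\|_\infty^2\,\frac{e^{-2\lambda T_\e}}{2\lambda}=C\e^2 a(\e)^2=:b_\e,
\]
a deterministic bound that does not depend on $x$. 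The exponential martingale inequality of \lemref{lem:exp-marting-ineq}, applied to $N$ started at time $T_\e$, then gives
\[
\P\Big(\sup_{t\ge T_\e}|N(t)|\ge \tfrac12 a(\e)\e^\gamma\Big)\le 2\exp\!\Big(-\frac{a(\e)^2\e^{2\gamma}}{8\,b_\e}\Big)=2\exp\!\big(-c\,\e^{2\gamma-2}\big),
\]
and since $\gamma<1$ the exponent $2\gamma-2$ is negative, so this probability tends to $0$ faster than any power of $\e$, uniformly over $x$.

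Combining the last three displays: on $\{|M_\e|\le a(\e)\}$ one has $\tau_\cV^\e\ge T_\e$, hence $|M_\e-M_\e(T_\e)|\le \sup_{t\ge T_\e}|N(t)|+C\e^2 a(\e)$, so for small $\e$ the event in the statement is contained in $\{\sup_{t\ge T_\e}|N(t)|\ge \tfrac12 a(\e)\e^\gamma\}$, whose probability is $\le \e^{\gamma'}$ once $\e$ is small enough; the threshold $\e_0$ depends only on $\gamma,\gamma',\theta,c,R$ and $\|\tilde\sigma\|_\infty,\|h\|_\infty$, which yields the claimed uniformity in $x$ (condition \eqref{eq:K-criterion} is not needed here). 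I do not expect a serious obstacle: the only genuinely load-bearing step is the exact identification $\{|M_\e|\le a(\e)\}=\{\tau_\cV^\e\ge T_\e\}$, which lets us reduce to a martingale tail bound whose bracket is deterministically of order $\e^2 a(\e)^2$ thanks to the weight $e^{-2\lambda s}$ in $U_\e$; everything after that is a one-line application of the exponential inequality.
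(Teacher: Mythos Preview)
Your proof is correct and follows essentially the same approach as the paper: the identification $\{|M_\e|\le a(\e)\}=\{\tau_\cV^\e\ge T_\e\}$, the deterministic bound on the $V_\e$-increment via $e^{-\lambda T_\e}=\e a(\e)/R$, and the exponential martingale inequality applied to the $U_\e$-increment with bracket of order $\e^2 a(\e)^2$. Your observation that condition~\eqref{eq:K-criterion} is not actually used here is also correct.
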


\begin{proof}
Observe that $\{|M_\e|\leq a(\e)\}=\{\tau_\cV^\e\geq T_\e\}$  by~\eqref{eq:basic-eq-on-exit} 
and~\eqref{eq:deterministic-time}. 
 This allows us to write
\begin{align*}
\P&\left(|U_\e(\tau_\cV^\e)-U_{\e}(T_\e)|\geq\frac{a(\e)\e^\gamma}{2};~|M_\e|\leq a(\e)\right)\\
&~~~~~~~~~~~~~~~~
=\P\left(\left|\int_{T_\e}^{\tau_\cV^{\e}}e^{-\lambda t}\tilde{\sigma}(Y_\e(s))dW(s)\right|\geq \frac{a(\e)\e^\gamma}{2};~\tau_\cV^\e\geq T_\e\right)\\
&~~~~~~~~~~~~~~~~\leq\P\left(\sup_{t\geq T_\e}\left|\int_{T_\e}^te^{-\lambda t}\tilde{\sigma}(Y_\e\left(s)\right))dW(s)\right|\geq \frac{a(\e)\e^\gamma}{2}\right)\\
%&~~~~~~~~~~~~~~~~=\lim_{T\to\infty}\P\left(\sup_{t\in[T_\e,T]}\left|\int_{T_\e}^te^{-\lambda t}\tilde{\sigma}(Y_\e\left(s)\right))dW(s)\right|\geq \frac{a(\e)\e^\gamma}{2}\right)\\
&~~~~~~~~~~~~~~~~\leq 2\exp\left\{-C\frac{a^2(\eps) \eps^{2\gamma}}{e^{-2\lambda T_\eps}} \right\}=
2\exp\left\{-C\frac{a^2(\eps) \eps^{2\gamma}}{e^{-2(\log\frac{R}{\e}-\log a(\e)) }} \right\}
= 2\exp\left\{-C\frac{\eps^{2\gamma}}{ \eps^2 } \right\}\leq\frac{\e^{\gamma'}}{2},
\end{align*}
for any $\gamma'>0$, and sufficiently small $\eps$, where we use the boundedness of $\tilde{\sigma}$ and Lemma \ref{lem:exp-marting-ineq} in the last inequality. At the same time, on $\{\tau_\cV^{\e}\geq T_\e\}$, we have
\[
|V_\e(\tau_\cV^\e)-V_{\e}(T_\e)|\leq \frac{\e}{2\lambda}\sup_{t\leq\tau_\cV^\e}|a(Y_{\e}(t))|e^{-\lambda T_\e}\leq \frac{\e^2a(\e)\|h\|_\infty }{2\lambda R},
\]
where we used that $h$ is bounded. Therefore, the triangle inequality and a simple union bound gives
\begin{align*}
\P\Big(\left|M_\e-M_\e(T_\e)\right|\geq & a(\e)\e^\gamma;~|M_\e|\leq a(\e)\Big)\\
\leq & \P\left(|U_\e(\tau_\cV^\e)-U_{\e}(T_\e)|\geq \frac{a(\e)\e^\gamma}{2};~|M_\e|\leq a(\e)\right)\\
&+ \P\left(|V_\e(\tau_\cV^\e)-V_{\e}(T_\e)|\geq \frac{a(\e)\e^\gamma}{2};~|M_\e|\leq a(\e)\right)\leq\e^{\gamma'}
\end{align*}
provided $\gamma\in (0,1)$ and $\e$ is sufficiently small.
\end{proof}

Our next goal is to show that $M_\e(T_\e)$ satisfies the desired small ball asymptotics.  The next proposition is the key technical result of this paper and will be proved along other
useful results in Section \ref{sec:mall-prop-proof} with Malliavin calculus  tools. 
\begin{proposition}\label{prop:main-malliavin-lemma}
Assume $Y_\e(0)=\e x$ and let $T_\e'>0$ be a 
function of $\e$ 
such that
\begin{equation}\label{eq:T-prime-cond}
\frac{\lambda T_\e'}{\log\e^{-1}}\in\left[1-\frac{c}{\log\e^{-1}},2-\kappa\right]
\end{equation}
for some $c,\kappa>0$ and for all sufficiently small $\e$ . The random variable $M_\e'=M_\e(T_\e')$ has a continuous, bounded density $p_{\e}^{x}(z)$. Moreover, for every $K(\e)$ satisfying \eqref{eq:K-criterion},
\begin{equation}
\lim_{\e\downarrow 0}\sup_{|x|\leq K(\e)}\sup_{z\in\R}\Big(|p_{\e}^{x}(z)-p^{x}(z)|e^{|x-z|}\Big)=0,
\label{eq:rate-of-convergence-of-densities}
\end{equation}
where $p^x$ is the density of a Gaussian random variable with mean $x$ and variance $(2\lambda)^{-1}\sigma^2(0)$:
\[
p^{x}(z)=\sqrt{\frac{\lambda}{\pi}}\frac{1}{\sigma(0)}e^{-\lambda\left(\frac{z-x}{\sigma(0)}\right)^2}.
\]
\end{proposition}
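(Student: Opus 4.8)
\textbf{Proof proposal for Proposition~\ref{prop:main-malliavin-lemma}.}

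The plan is to invoke the Malliavin calculus density formula (Proposition~\ref{prop:Nua-dens}, referenced in the introduction) in the form $p_\e^x(z) = \E\big[\mathbf{1}_{\{M_\e' > z\}}\, \delta\!\big(D M_\e' / \|D M_\e'\|_{L^2[0,T_\e']}^2\big)\big]$, after first establishing that $M_\e'=M_\e(T_\e')$ lies in the relevant Malliavin-Sobolev space and that its Malliavin derivative is nondegenerate. The Malliavin derivative of $M_\e'$ is computed from \eqref{eq:Y-M-corresp}: differentiating $U_\e(T_\e')=\int_0^{T_\e'}e^{-\lambda s}\tilde\sigma(Y_\e(s))\,dW(s)$ and $V_\e$ gives, for $r\le T_\e'$,
\[
D_r M_\e'=e^{-\lambda r}\tilde\sigma(Y_\e(r))+\int_r^{T_\e'}e^{-\lambda s}\tilde\sigma'(Y_\e(s))D_rY_\e(s)\,dW(s)+\tfrac\e2\int_r^{T_\e'}e^{-\lambda s}h'(Y_\e(s))D_rY_\e(s)\,ds,
\]
where $D_rY_\e(s)$ solves the linearized SDE and is of order $\e e^{\lambda s}$ (since $Y_\e$ itself is $\e e^{\lambda s}M_\e(s)$ and $M_\e(s)$ is $O(1)$ in the regimes we care about). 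The key point is that the leading term $e^{-\lambda r}\tilde\sigma(Y_\e(r))$ dominates: because $Y_\e(s)\to 0$ in probability on the events of interest and $\tilde\sigma(0)=f'(0)\sigma(0)=\sigma(0)$, one gets $\tilde\sigma(Y_\e(r))\to\sigma(0)$, so $\|DM_\e'\|_{L^2}^2=\int_0^{T_\e'}(D_rM_\e')^2\,dr \to \int_0^\infty e^{-2\lambda r}\sigma^2(0)\,dr=\sigma^2(0)/(2\lambda)$, which is exactly the target Gaussian variance. The correction terms must be shown to be $L^p$-small uniformly in $x$, using Lemma~\ref{lem:exp-marting-ineq}, the $O(\e)$ bounds in \eqref{eq:contributions-to-M}, boundedness of $\tilde\sigma,\tilde\sigma',h,h'$, and the Burkholder--Davis--Gundy inequality; this is where the upper cutoff $\lambda T_\e'/\log\e^{-1}\le 2-\kappa$ enters, since it keeps $\e e^{\lambda T_\e'}\le \e^{\kappa}$ small so that $Y_\e$ stays near $0$ with overwhelming probability.

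Next I would estimate the Skorokhod integral $\delta(DM_\e'/\|DM_\e'\|^2)$ in $L^q$ using the standard bound $\|\delta(u)\|_{L^q}\le C_q(\|u\|_{L^q(\Omega;H)}+\|Du\|_{L^q(\Omega;H\otimes H)})$, which requires controlling $D^2M_\e'$ as well — a second round of the same differentiation and the same smallness estimates. Combined with $\P(M_\e'>z)\le \P(\sup_t|U_\e(t)|\ge |z|-|x|-C\e)\le 2e^{-C(|z|-|x|-C\e)^2}$ from Lemma~\ref{lem:exp-marting-ineq}, Hölder's inequality gives the Gaussian-type tail bound that produces the weight $e^{|x-z|}$ in \eqref{eq:rate-of-convergence-of-densities}: indeed $p_\e^x(z)\le \|\mathbf{1}_{\{M_\e'>z\}}\|_{L^{q'}}\|\delta(\cdot)\|_{L^q}\le C e^{-c(|z|-|x|)_+^2/q'}$, and the same for $z<x$, which decays faster than $e^{-|x-z|}$ once $|x-z|$ is large (here $K(\e)$ sub-polynomial keeps $|x|$ negligible). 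For the convergence itself, I would show $M_\e'$ converges in distribution (indeed in $L^2$, after subtracting $x$) to $\mathcal N(0,\sigma^2(0)/(2\lambda))$ — this is essentially the classical computation behind \cite{Kifer1981},\cite{Day95} — and then upgrade convergence in law to uniform convergence of densities on compact $z$-sets by a standard argument: the density formula plus uniform $L^q$ bounds on $\delta(\cdot)$ give equicontinuity and uniform boundedness of $p_\e^x$, so Arzelà--Ascoli forces uniform convergence on compacts; combined with the uniform Gaussian tail bound just established, this yields the weighted-sup convergence \eqref{eq:rate-of-convergence-of-densities}.

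The main obstacle I anticipate is making every estimate \emph{uniform in $x$ over $|x|\le K(\e)$ and uniform in the admissible range of $T_\e'$} while the lower endpoint $\lambda T_\e'/\log\e^{-1}\ge 1-c/\log\e^{-1}$ forces $\e e^{\lambda T_\e'}$ only as small as a constant multiple of $R$ — not $o(1)$ — so $Y_\e(s)$ near $s=T_\e'$ need not be small, and one cannot simply Taylor-expand $\tilde\sigma(Y_\e(s))$ around $0$ there. The fix is to separate the time interval into $[0,T_\e'-L]$ (with $L$ a large constant, or slowly growing), where the exponential weight $e^{-\lambda s}$ has already made the tail contribution to $\|DM_\e'\|^2$ and to the stochastic integrals negligible regardless of the size of $Y_\e$, and $[T_\e'-L,T_\e']$, where the weight $e^{-\lambda s}\le e^{-\lambda(T_\e'-L)}\le C\e e^{\lambda L}/R$ is itself $O(\e)$; so on that latter piece every term carries an extra factor of $\e$ and is harmless. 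Reconciling these two decompositions with the BDG/exponential-martingale estimates, and checking that the nondegeneracy $\|DM_\e'\|^2\ge c>0$ holds on a set of probability $1-o(\e^{\alpha-1})$ (which also needs the uniform ellipticity of $\tilde\sigma$ on $[-R,R]$, valid only while $Y_\e$ has not left $\cV$ — another place a stopping-time truncation is needed), is the delicate bookkeeping that the rest of Section~\ref{sec:mall-prop-proof} must carry out.
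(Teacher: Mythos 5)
Your starting point (the Nualart density formula from Proposition~\ref{prop:Nua-dens}), the explicit computation of $\cD_rM'_\eps$ via the Duhamel representation, and the realization that $\|\cD M'_\eps\|_H^2\to\sigma^2(0)/(2\lambda)$ all agree with the paper, as do the need for moment bounds on $\|\cD M'_\eps\|_H^{-1}$, $\|\cD^2 M'_\eps\|_{H\otimes H}$, and the use of Meyer-type bounds for $\delta$. But the middle of your argument takes a genuinely different route. The paper never passes through distributional convergence plus Arzelà--Ascoli: it instead compares the limiting Gaussian $I=x+\sigma(0)\int_0^\infty e^{-\lambda t}dW(t)$ with $M'_\eps$ directly \emph{inside} the density formula, writing $p_\eps^x(z)-p^x(z)=\E\bigl[\ONE_{\{M'_\eps>z\}}\delta(\Delta_\eps)\bigr]+\E\bigl[(\ONE_{\{M'_\eps>z\}}-\ONE_{\{I>z\}})\delta(\cD I/\|\cD I\|_H^2)\bigr]$ with $\Delta_\eps=\cD M'_\eps/\|\cD M'_\eps\|_H^2-\cD I/\|\cD I\|_H^2$, and bounding each term by Cauchy--Schwarz. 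This subtraction scheme converts \eqref{eq:rate-of-convergence-of-densities} into exactly the three estimates of Lemma~\ref{lem:mall-deriv-estim} plus an $L^2$ closeness estimate ($\E|I-M'_\eps|^2\le C\eps^\gamma$, Lemma~\ref{lem:M-and-I-close}), with the exponential weight in $z$ coming out of a Gaussian tail bound on the indicator difference. Your compactness route can be made to work but buys less: Arzelà--Ascoli gives no rate, the uniformity in $x$ over the growing window $|x|\le K(\eps)$ has to be extracted by a subsequence argument, and the equicontinuity step is mildly circular (you need the uniform density bound before you can convert $\P(z_1<M'_\eps\le z_2)$ into $\|p^x_\eps\|_\infty|z_1-z_2|$).

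Two of the obstacles you flag are not actually obstacles. First, your worry about $Y_\eps(s)$ not being small near $s=T'_\eps$, and the proposed splitting of $[0,T'_\eps]$, is unnecessary: the bound $\E|Y_\eps(t)|^2\le C\eps^2e^{2\lambda t}(1+|x|^2)$ (the paper's \eqref{eq:EYsquare-bound}) is exactly cancelled by the $e^{-2\lambda t}$ weight in $\int_0^{T'_\eps}e^{-2\lambda t}\E|\tilde\sigma(Y_\eps(t))-\sigma(0)|^2dt$, leaving only a factor $\eps^2T'_\eps(1+K^2(\eps))\to 0$; no time decomposition is needed. Second, and more substantively, your plan to handle the nondegeneracy of $\|\cD M'_\eps\|_H^2$ via a stopping-time truncation and a high-probability lower bound is the wrong framework: to even invoke Proposition~\ref{prop:Nua-dens} you must show $\cD M'_\eps/\|\cD M'_\eps\|_H^2\in\mathrm{Dom}\,\delta$, which requires actual control of $\E\|\cD M'_\eps\|_H^{-m}$ for all $m$, not just $\P(\|\cD M'_\eps\|_H^2\ge c)=1-o(\eps^{\alpha-1})$. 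The paper gets these moments without any truncation, by observing from \eqref{eq:linearized-dynamics} that $\cD_tM_\eps(u)$ is a Dol\'eans--Dade exponential, hence strictly positive with all positive \emph{and} negative moments controlled by the martingale property; this is what makes the negative-moment estimate \eqref{eq:negative-moment-bounded} clean. Your proposal would need to be repaired on this point to be complete.
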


In the proof of the theorem on the outcome of the linear evolution, we will use the Markov property many times, so it is convenient to  denote by $\P_y$ the joint distribution of the driving Wiener process $W$ and the process $Y_\e$ with initial point $Y_\e(0)=y$.

\noindent\textit{Proof of Theorem \ref{thm:linear}.}~ As observed in the beginning of this section, it only remains to show that~$M_\e$ satisfies \eqref{eq:lin-thm-M_e-claim}. Let us derive the theorem from the following
statement that we will prove later: if $b(\e)=o\left(a(\e)\right)$ and $K(\e)$ satisfies \eqref{eq:K-criterion}, then 
$P(\eps,x)=\P_{\e x}\left(b(\e)\leq M_\e(T_\e)\leq a(\e)\right)$ satisfies
\begin{equation}\label{eq:M-at-full-T}
\lim_{\eps\to 0}\sup_{|x|\leq K(\e)}\left|\frac{P(\eps,x)}{a(\eps)}-p^x(0)\right|=0.
\end{equation}
Let us choose $\gamma'$ large enough such that 
$\eps^{\gamma'}=o(a(\eps))$, $\eps\to 0$.
%$\lim_{\e\downarrow 0}\e^{-\gamma'}a(\e)=\infty$. 
Lemma~\ref{lem:random-to-det-time} and~\eqref{eq:M-at-full-T} imply that if $\gamma\in(0,1)$, then
\begin{align}
\label{eq:upper-bound-M_e-probs}
\P_{\e x}\Big(0< M_\e\leq a(\e)\Big)&\leq \e^{\gamma'}+\P_{\e x}\Big(0<M_\e\leq a(\e);~\left| M_\e-M_\e(T_\e)\right|<a(\e)\e^\gamma\Big)
\\
\notag
&\leq \e^{\gamma'}+\P_{\e x}\Big(-a(\e)\e^\gamma\leq M_\e(T_\e)\leq a(\e)(1+\e^\gamma)\Big)= p^x(0)a(\e)+o\left(a(\e)\right).
\end{align}
Similarly, 
\begin{align}
\label{eq:lower-bound-M_e-probs}
\P_{\e x}
\Big(0< M_\e\leq a(\e)\Big)
&\geq \P_{\e x}\Big(0<M_\e\leq a(\e);~\left| M_\e-M_\e(T_\e)\right|<a(\e)\e^\gamma\Big)
\\
\notag&\geq \P_{\e x}\Big(a(\e)\e^\gamma\leq M_\e(T_\e)\leq a(\e)(1-\e^{\gamma});~\left| M_\e-M_\e(T_\e)\right|<a(\e)\e^\gamma\Big)
\\
\notag&= \P_{\e x}\Big(a(\e)\e^\gamma\leq M_\e(T_\e)\leq a(\e)(1-\e^{\gamma})\Big)-\Delta(\eps,x),
\\
\notag&= 
p^x(0)a(\e)+o\left(a(\e)\right)
-\Delta(\eps,x),
\end{align}
where
\begin{align*}
\Delta(\eps,x)&=\P_{\e x}\Big(a(\e)\e^\gamma\leq M_\e(T_\e)\leq a(\e)(1-\e^{\gamma});~\left| M_\e-M_\e(T_\e)\right|\ge a(\e)\e^\gamma\Big)
%\\
%&
\le\Delta_1(\eps,x)+\Delta_2(\eps,x),
\end{align*}
with
\begin{align*}
\Delta_1(\eps,x)&= \P_{\e x}\Big(\tau_\cV^\e\ge T_\e;~| M_\e-M_\e(T_\e)|\ge a(\e)\e^\gamma\Big),\\
\Delta_2(\eps,x)&= \P_{\e x}\Big(\tau_\cV^\e\leq T_\e;~|M_\e(T_\e)|\leq a(\e)(1-\e^\gamma)\Big).
\end{align*}
Due to~\eqref{eq:upper-bound-M_e-probs} and~\eqref{eq:lower-bound-M_e-probs}, the desired relation \eqref{eq:lin-thm-M_e-claim} for the positive
sign follows from 
\begin{equation}
\label{eq:Deltas-small}
\Delta_i(\eps,x)=o(a(\eps)),\quad i=1,2, 
\end{equation}
uniformly in $x$.
Since $\{\tau_\cV^\e \ge T_\e\}=\{|M_\e| \le a(\e)\}$, 
Lemma~\ref{lem:random-to-det-time} immediately implies that $\Delta_1(\eps,x)<\eps^{\gamma'}$ for any $\gamma'>0$
and sufficiently small $\eps$.
To estimate~$\Delta_2(\eps,x)$, we note that $|M_\e(T_\e)|\leq a(\e)(1-\e^\gamma)$ implies
\[
|Y_{\e
}(T_\e)|=\e e^{\lambda T_\e} |M_\e(T_\e)|\leq R(1-\e^\gamma),
\]
and applying the strong Markov property to the process $Y_\eps$ after $\tau_\cV^\e$, we obtain
\[
\Delta_2(\eps,x)\leq \max_{z=\pm R}\P_z\left(\inf_{t\in[0,T_\e]}|Y_{\e}(t)|\leq R(1-\e^{\gamma})\right).
\]
Duhamel's formula~\eqref{eq:Y-M-corresp} and the reverse triangle inequality imply
\[
|Y_\e(t)|\geq e^{\lambda t}\left||Y_\e(0)|-\e |U_\e(t)|-\e |V_\e(t)|\right|\geq |Y_\e(0)|-\e|U_\e(t)|-\e|V_\e(t)|,
\]
and, combining the last two displays, \eqref{eq:contributions-to-M}, and Lemma~\ref{lem:exp-marting-ineq}, we obtain
\[
\Delta_2(\eps,x)\leq \P\Big(\sup_{t\in [0,T_\e]}|U_\e(t)|\geq R\e^{-(1-\gamma)}-C\Big)\leq 3e^{-\frac{C}{\e^{2(1-\gamma)}}}< \e^{\gamma'},
\]
for any $\gamma<1$ and sufficiently small $\e$.  Thus, \eqref{eq:Deltas-small} is verified, finishing the proof of \eqref{eq:lin-thm-M_e-claim} for the positive sign. The result for $-M_\e$ is proved the same way.

%
%
%
%
%the right-hand side of \eqref{eq:lower-bound-M_e-probs} is greater than
%\begin{multline*}
%\P_{\e x}\left(a(\e)\e^\gamma<M_\e(T_\e)\leq a(\e)(1-\e^{\gamma})\right)-\P_{\e x}\left(|M_\e|\leq a(\e);~|M_\e-M_\e(T_\e)|\geq a(\e)\e^\gamma\right)-\e^{\gamma'}\\
%\geq \P_{\e x}\left(a(\e)\e^\gamma<M_\e(T_\e)\leq a(\e)(1-\e^{\gamma})\right)-\e^{\gamma'}=p^x(0)a(\e)+ o\left(a(\e)\right),
%\end{multline*}
%where the first inequality is due to Lemma \ref{lem:random-to-det-time} and the last equality follows from \eqref{eq:M-at-full-T}.
%This finishing the proof of \eqref{eq:lin-thm-M_e-claim} for the positive sign. The result for $-M_\e$ is proved the same way. 

\medskip

It remains to prove \eqref{eq:M-at-full-T}. 
If $\theta\in(0,1)$, then  $T_\e=T'_\eps$ satisfies \eqref{eq:T-prime-cond}, so 
%as
%\[
%\e^2e^{\lambda T_\e}=\frac{\e R}{a(\e)}\sim c\e^{1-\theta}R,
%\]
\eqref{eq:M-at-full-T} immediately follows from Lemma \ref{lem:random-to-det-time} and Proposition \ref{prop:main-malliavin-lemma}. If $\theta\geq 1$, then we cannot apply Proposition \ref{prop:main-malliavin-lemma} directly,
so our strategy will be to extend it to longer times using an iterative procedure based on the Markov property.
Namely, we let $N=\lfloor\theta\rfloor+1$ and
consider the shorter time  $T'_\eps=T_\eps/N$. There is $\kappa>0$ 
such that for sufficiently small
$\eps$,
\[
\alpha_{\eps}:=\frac{\lambda T'_\eps}{\log \eps^{-1}}=\frac{1+\theta-\zeta_\eps}{\lfloor\theta\rfloor+1}\in \left(1-\frac{C}{\log\e^{-1}},2-\kappa\right),
\]
where $\zeta_\eps=\cO\left(1/\log\e^{-1}\right)$. We recall that $C>0$ stands for a constant that may change on every appearance.
In fact, observe also that the last display and $N\in\mathbb{N}$ imply
\begin{equation}\label{eq:N-interval}
2\leq N\leq\frac{\theta}{\alpha_\e}+1+\frac{C}{\log\e^{-1}}
\end{equation}
for sufficiently small $\e$. For simpler notation, let us use the abbreviations
\[
T_{\e,k}=k T_{\e}',\qquad M_{\e,k}=M_\e(T_{\e,k}),\qquad Y_{\e,k}=Y_\e(T_{\e,k}),\qquad \mathcal{F}_{\e,k}=\mathcal{F}_{T_{\e,k}},\qquad k=1,\dots, N.
\]

We will show by induction that for each $\eps>0$ there is a sequence $(H_{\e,k})_{k=0}^\infty$ of centered Gaussian random variables independent of $\mathcal{F}_{\e,k}$, with variance 

%\purple{This formula is incompatible with the definition of $\xi$ and the recursive definition of $H_{\eps,k}$ in the end of the proof. I think the nicest expression you can get is just saying it is $\eps^{2 k \alpha}(1+q_k(\eps^\alpha))$ for a polynomial $q_k(\cdot)$ with zero free term}

%\blue{So the recursive definition is
%\[
%H_{\e,k-1}=\xi_{k-1}+\e^{\alpha_\e}H_{\e,k},\qquad H_{\e,N}=0,
%\]
%where $\{\xi_i\}$ is some i.i.d sequence of centered Gaussians with variance $\sigma^2(0)/2\lambda$. According to this,
%\[
%H_{\e,N-1}=\xi_{N-1},\qquad H_{\e,N-2}=\xi_{N-2}+\e^{\alpha_\e}\xi_{N-1}.
%\]
%and in general
%\[
%H_{\e,k}=\sum_{i=k}^{N-1}\e^{(i-k)\alpha_\e}\xi_i.
%\]
%Taking variance
%\[
%\E H^2_{\e,k}=\frac{\sigma^2(0)}{2\lambda}\sum_{i=k}^{N-1}\e^{2(i-k)\alpha_\e}=\frac{\sigma^2(0)}{2\lambda}\sum_{i=0}^{N-k-1}\e^{2 i\alpha_\e}=\frac{\sigma^2(0)}{2\lambda}\frac{\e^{2(N-k-1)}-1}{\e^{2\alpha_\e}-1}
%\]
%so you are right, a $-1$ was missing in the exponent from (3.11), but otherwise the formula is nice I think. Or am I missing something?}

%\purple{The ``$-1$'' you inserted should not be there.}
\begin{equation}\label{eq:H-variance}
\E H_{\e,k}^2=\frac{\sigma^2(0)}{2\lambda}\frac{1-\e^{2(N-k)\alpha_\e}}{1-\e^{2\alpha_\e}},
\end{equation}
and such that 
\begin{equation}\label{eq:to-be-proved-by-ind}
P(\eps,x)=\P_{\e x}\left(b(\e)\leq M_{\e,k}+\e^{k\alpha_\e} H_{\e,k}\leq a(\e)\right)+o(a(\e)),\quad \e\downarrow 0, 
\end{equation}
for all $k=0,\dots,N$. Using \eqref{eq:to-be-proved-by-ind} with $k=0$ finishes the proof of \eqref{eq:M-at-full-T} as $M_{\e,0}=x$ and the random variable $H_{\e,0}$ is centered Gaussian with variance converging to $(2\lambda)^{-1}\sigma^2(0)$ as $\e\downarrow 0$ by \eqref{eq:H-variance}.

Now we proceed with the proof of \eqref{eq:to-be-proved-by-ind} by first noting that the case $k=N$ is trivial as $H_{\e,N}=0$. Let us assume that \eqref{eq:to-be-proved-by-ind} holds for some $k=1,\dots,N$ and show that it therefore holds for $k-1$ as well. Due to the  relation $Y_\e(t)=\e e^{\lambda t}M_\e(t)$, the Markov property allows us to write:
\begin{multline*}
\P_{\e x}\left(b(\e)\leq M_{\e,k}+\e^{k\alpha_\e} H_{\e,k}\leq a(\e)\right)=\E_{\e x}\left[\P_{\e x}\left(b(\e)\leq M_{\e,k}+\e^{k\alpha_\e}H_{\e,k}\leq a(\e)|\mathcal{F}_{T_{\e,{k-1}}}\right)\right]\\
=\int_{-\infty}^{\infty} \P_{\e x}\left(b(\e)\leq M_{\e,k}+\e^{k\alpha_\e}H_{\e,k}\leq a(\e)\bigg|M_{\e,k-1}=z\right)r_{x}^{(k-1)}(dz),
\end{multline*}
where we introduced the measure
\[
r_{x}^{(k-1)}(dz)=\P_{\e x}\left(M_{\e,k-1}\in dz\right).
\]
Since $Y_{\e,k}=\e e^{\lambda T_{\e,k}}M_{\e,k}=\e^{1-k\alpha_\e}M_{\e,k}$, for all $k=0,\dots,N$, the integrand equals
\begin{align*}
%\P_{\e x}\Big(b(\e)\leq M_{\e,k}+&\e^{k\alpha_\e}H_{\e,k}\leq a(\e)|M_{\e,k-1}=z\Big)\\
%&=
\P_{\e x}\left(\e^{1-k\alpha_\e}b(\e)\leq Y_{\e,k}+\e H_{\e,k}\leq \e^{1-k\alpha_\e}a(\e)\bigg|\ Y_{\e,k-1}=\e^{1-(k-1)\alpha_\e}z\right)\\
%&
=\P_{\e^{1-(k-1)\alpha_\e}z}\left(\e^{1-k\alpha_\e}b(\e)\leq Y_{\e,1}+\e H_{\e,k}\leq \e^{1-k\alpha_\e}a(\e)\right),
\end{align*}
where we used the Markov property and the independence of $H_{\e,k}$ and $\mathcal{F}_{\e,k}$ in the last step. Note  that $Y_{\e,1}$ is independent of $H_{\e,k}$. Combining the last three displays and the change of variables $z\to \e^{(k-1)\alpha_\e}z$ gives
\begin{align}
\label{eq:M(2T)ba}\P_{\e x}&\left(b(\e)\leq M_{\e,k}+\e^{k\alpha_\e}  H_{\e,k}\leq a(\e)\right)\\
\notag &=\int_{-\infty}^\infty \P_{\e z}\left(\e^{1-k\alpha_\e}b(\e)\leq Y_{\e,1}+\e H_{\e,k}\leq \e^{1-k\alpha_\e}a(\e)\right)r_{x}^{(k-1)}(\e^{(k-1)\alpha_\e}dz)
\end{align}

Let us now prove that there are $c_1, c_2>0$ such that for any $L>0$, we have
\begin{align}\label{eq:fun-fact-1}
\sup_{|z|\geq L}\P_{\e z}\left(\e^{1-k\alpha_\e}b(\e)\leq Y_{\e,1}+\e H_{\e,k}\leq \e^{1-k\alpha_\e}a(\e)\right)&\leq c_1 e^{-c_2 L^2}.
\end{align}
%\blue{For $k=N$, we have
%\[
%\e^{1-N\alpha_\e}a(\e)=\e e^{\lambda T_\e}a(\e)=R,
%\]
%while
%\[
%Y_{\e,1}=\e e^{\lambda T_\e'}(x+U+V)\approx \e e^{\lambda T_\e'}L.
%\]
%Thus $T_\e'$ needs to be at least of order $\lambda^{-1}\log\e^{-1}$ for this to work. However when $\theta=1$ and therefore $T_\e=(2/\lambda)\log\e^{-1}$, we cannot cover this with two intervals with both long enough.}
Note that \eqref{eq:N-interval} implies
\[
(k-1)\alpha_\e\leq(N-1)\alpha_\e\leq \theta+\frac{C}{\log\e^{-1}},
\]
so, using that $\e^{C/\log\e^{-1}}=e^{-C}=const>0$, we have
\[
\e^{1-\alpha_\e}\geq\e^{1-k\alpha_\e+C/\log\e^{-1}}a(\e)\geq C\e^{1-k\alpha_\e}a(\e),
\]
 which implies
\begin{align*}
|Y_{\e,1}+\e H_{\e,k}|&=|Y_{\e}(T_\e')+\e H_{\e,k}|\geq\e e^{\lambda T_\e'}\left(|z|-|U_{\e}(T_\e')|-|V_\e(T_\e')|\right)-\e|H_{\e,k}|\\
&\geq \e^{1-\alpha_\e}\left(|z|-|U_{\e}(T_\e')|-|V_\e(T_\e')|-\e^{\alpha_\e}|H_{\e,k}|\right)\\
&\geq C_1\e^{1-k\alpha_\e}a(\e)\left(|z|-|U_{\e}(T_\e')|-\eps C_2-\e^{\alpha_\e}|H_{\e,k}|\right).
\end{align*}
Since $b(\e)=o\left(a(\e)\right)$, the left-hand side of \eqref{eq:fun-fact-1} can be thus bounded above by
\[
\sup_{|z|\geq L}\P_{\e z}\left(|Y_{\e,1}+\e H_{\e,k}|\leq \e^{1-k\alpha_\e}a(\e)\right)
\leq\P\Big(|H_{\e,k}|\geq L\Big)+\P\Big(|U_\e(T_\e')|\geq  L(1-\e^{\alpha_\e})-\frac{1}{C_1}
-\eps C_2 \Big),
\]
and \eqref{eq:fun-fact-1}  follows from the standard Gaussian tail bound, \eqref{eq:H-variance}, and
 Lemma \ref{lem:exp-marting-ineq}.

Conditioning on $H_{\e,k}$, using the independence of $M_\e(T_\e')$ and $H_{\e,k}$,
and using the existence of density of $M_\eps(T'_\eps)$ guaranteed by Proposition~\ref{prop:main-malliavin-lemma}, 
 we obtain
\begin{align*}
\P&_{\e z}\left(\e^{1-k\alpha_\e}b(\e)\leq Y_{\e,1}+\e H_{\e,k}\leq \e^{1-k\alpha_\e}a(\e)\right)\\
&=\P_{\e z}\left(\e^{-(k-1)\alpha_\e}b(\e)\leq M_{\e}(T_\e')+\e^{\alpha_\e}H_{\e,k}\leq \e^{-(k-1)\alpha_\e}a(\e)\right)= \E\int_{b(\e)\e^{-(k-1)\alpha_\e}-\e^{\alpha_\e}H_{\e,k}}^{a(\e)\e^{-(k-1)\alpha_\e}-\e^{\alpha_\e}H_{\e,k}}p_\e^{z}(u)du.
\end{align*}
 This, along with the induction hypothesis 
 \eqref{eq:to-be-proved-by-ind}, relations \eqref{eq:M(2T)ba}--\eqref{eq:fun-fact-1}, and Fubini's theorem, gives
\begin{align}\label{eq:exp+bigint}
P(\eps,x)=\E\int_{b(\e)\e^{-(k-1)\alpha_\e}-\e^{\alpha_\e}H_{\e,k}}^{a(\e)\e^{-(k-1)\alpha_\e}-\e^{\alpha_\e}H_{\e,k}}\int_{-L}^{L}p_{\e}^{z}(u)~r_{x}^{(k-1)}\left(\e^{(k-1)\alpha_\e}dz\right)du+\mathcal{O}\left(e^{-\lambda L^2}\right)
+o(a(\eps)).
\end{align}
To use the uniform convergence statement of Proposition \ref{prop:main-malliavin-lemma}, while simultaneously making the error term decay sufficiently fast, we choose
\[
L=L(\e)=K(\eps)=\sqrt{\frac{2\theta}{\lambda}\log\e^{-1}}.
\]
Let us estimate the error we make if we replace $p_{\e}^{z}$ by $p^{z}$ in~\eqref{eq:exp+bigint}.
We take a random variable $\Psi$ 
independent of $M_{\e,k-1}$ and $H_{\e,k}$,
with a centered Laplace density $f_\Psi(x)=e^{-|x|}/2$, $x\in\R$, 
 and apply Proposition~\ref{prop:main-malliavin-lemma} to bound
$|p_{\e}^{z}(u)-p^z(u)|$ using $f_\Psi$. This allows us to
write
\begin{align*}
&\E\int_{b(\e)\e^{-(k-1)\alpha_\e}-\e^{\alpha_\e}H_{\e,k}}^{a(\e)\e^{-(k-1)\alpha_\e}-\e^{\alpha_\e}H_{\e,k}}\int_{-L}^{L}|p_{\e}^{z}(u)-p^z(u)|~r_{x}^{(k-1)}\left(\e^{(k-1)\alpha_\e}dz\right)du
\\
=&o(1)\,\P_{\e x }\left\{\eps^{-(k-1)\alpha_\eps}|M_{\e,k-1}|\le L;\  \eps^{-(k-1)\alpha_\eps}M_{\e,k-1} +\Psi+\eps^{\alpha_\eps}H_{\e,k}\in \eps^{-(k-1)\alpha_\eps}[b(\eps), a(\eps)] \right\}
\\
=&o(1)\,\P_{\e x }\left\{M_{\e,k-1} +\eps^{(k-1)\alpha_\eps}(\Psi+\eps^{\alpha_\eps}H_{\e,k})\in [b(\eps), a(\eps)] \right\}=o(a(\eps)),
\end{align*}
where in the last identity we used
the independence of $M_{\e,k-1}$  and $\Psi+\eps^{\alpha_\eps}H_{\e,k}$
and the fact that random variables $M_{\eps,k}$, $k=1,\ldots,N$, $\eps>0$, have uniformly bounded
densities. The latter is a direct consequence of Lemma~\ref{lem:mall-deriv-estim} from  Section~\ref{sec:mall-prop-proof} and 
Proposition~2.1.2 of~\cite{Nualart2006} estimating the density of a Wiener
functional $F$ in terms of moments of $\|\cD F\|_H^{-1}$,  and 
$ \|\cD^2 F \|_{H\otimes H}$.

This allows us to write \eqref{eq:exp+bigint} as 
\begin{align*}
P(\eps,x)=\E\int_{b(\e)\e^{-(k-1)\alpha_\e}-\e^{\alpha_\e}H_{\e,k}}^{a(\e)\e^{-(k-1)\alpha_\e}-\e^{\alpha_\e}H_{\e,k}} \int_{-\infty}^{\infty}p^z(u)\P_{\e x}\left( \e^{-(k-1)\alpha_\e}M_{\e,k-1}\in dz\right)du+o(a(\e)),\\
\end{align*}
where we also used the decay rate of $p^z(u)$ at $\infty$ to restore the integration domain to the entire line.

Recalling that $p^z(u)=p^0(u-z)$, we change variables: $u'=\e^{(k-1)\alpha_\e}u$, $z'=\e^{(k-1)\alpha_\e}z$, and express the integral on the right-hand side of the last display as
\begin{align*}
\e^{-(k-1)\alpha_\e}&\E\int_{b(\e)-\e^{k\alpha_\e}H_{\e,k}}^{a(\e)-\e^{k\alpha_\e}H_{\e,k}} \int_{-\infty}^{\infty}p^{0}\left(\e^{-(k-1)\alpha_\e}(u'-\e^{(k-1)\alpha_\e}z)\right)\P_{\e x}\left( M_{\e,k-1}\in\e^{(k-1)\alpha_\e}dz\right)du'\\
&=\e^{-(k-1)\alpha_\e}\E\int_{b(\e)-\e^{k\alpha_\e}H_{\e,k}}^{a(\e)-\e^{k\alpha_\e}H_{\e,k}} \int_{-\infty}^{\infty}p^{0}\left(\e^{-(k-1)\alpha_\e}(u'-z')\right)\P_{\e x}\left( M_{\e,k-1}\in dz'\right)du'
\\
&=\P_{\e x}\left(b(\e)\leq M_{\e,k-1}+\e^{(k-1)\alpha_\e}\xi+\e^{k\alpha_\e}H_{\e,k}\leq a(\e)\right)
\\
&=\P_{\e x}\left(b(\e)\leq M_{\e,k-1}+\e^{(k-1)\alpha_\e}(\xi+\e^{\alpha_\e}H_{\e,k})\leq a(\e)\right),
\end{align*}
where $\xi$ is a centered Gaussian random variable independent of both $M_{\e,k-1}$ and $H_{\e,k}$ with variance $(2\lambda)^{-1}\sigma^2(0)$. This completes the proof of the induction step once one verifies, by a straightforward computation, that $H_{\e,k-1}:=\xi+\e^{\alpha_\e}H_{\e,k}$ has the desired variance given by \eqref{eq:H-variance}.
\qed

\begin{remark}\rm
In the second part of the previous proof, the random variables $\e^{(k-1)\alpha_\e}\xi_k$ with
\[
\xi_k=H_{\e,k-1}-\e^{\alpha_\e}H_{\e,k}
\]
essentially correspond to the distributional limit of the integral $I_k=\int_{(k-1)T_\e'}^{kT_\e'}e^{-\lambda t}\tilde\sigma(Y_\e(s))dW(s)$ conditioned on $Y_\e$ not having exited a neighborhood of the origin of size $\mathcal{O}(\e)$.
\end{remark}

\section{Malliavin calculus tools and the proof of Proposition \ref{prop:main-malliavin-lemma}}\label{sec:mall-prop-proof}
In this section we turn to the proof of Proposition \ref{prop:main-malliavin-lemma} by showing that $M'_\e:=M_\e(T'_\e)$ has a bounded continuous density with the desired asymptotics. We are going to employ Malliavin calculus tools, using~\cite{Nualart2006} as a basic reference. 

Let us first recall some basic notions. Let $H=L^2(\R_+)$ be the separable Hilbert space 
of square integrable functions on the line and let $B:H\mapsto L^2(\Omega)$ be the \emph{isonormal Gaussian process} on $H$ given by
\[
h\mapsto B(h)=\int_0^{\infty}h(s)dW(s).
\]

Let $L^2(\Omega;H)$ be the set of square integrable $H$-valued random variables with norm $\|u\|_{L^2(\Omega;H)}=\sqrt{\E\|u\|_H^2}$. The \emph{Malliavin derivative} operator $\cD:L^2(\Omega)\mapsto L^2(\Omega;H)$ is defined on random variables of the form 
\begin{equation}\label{eq:mall-deriv-simple-func}
F=f\left(B(h_1),\dots,B(h_n)\right),\qquad n\geq 1,%~f\in\cC^\infty(\R^n),~h_1,\dots,h_n\in H
\end{equation}
by the formula
\[
\cD F=\sum_{i=1}^n\partial_{x_i}f\left(B(h_1),\dots,B(h_n)\right)h_i.
\]
It is extended to a closed operator under the graph norm
\[
\|F\|_{1,2}=\sqrt{\|F\|_2^2+\|\cD F\|^2_{2;H}},
\]
with domain $\mathrm{D}_{1,2}$, where we adopt the notation $\|F\|_2\equiv \|F\|_{L^2(\Omega)}=\sqrt{\E F^2}$ and $\|u\|_{2;H}=\|u\|_{L^2(\Omega;H)}$.

This construction can be extended to Hilbert space valued random variables producing 
a closed operator $\cD:L^2(\Omega;H)\mapsto L^2(\Omega;H\otimes H)$  
%\[
%\cD\left(\sum_{j=1}^nF_je_j\right)=\sum_{j=1}^n\cD F_j\otimes e_j, \qquad n\geq1,~\textrm{$F_j$ are of the form %\eqref{eq:mall-deriv-simple-func}},
%\]
under the graph-norm 
\[
\|u\|_{1,2;H}=\sqrt{\|u\|_{2;H}^2+\|\cD u\|_{2;H\otimes H}^2},
\]
with domain $\mathrm{D}_{1,2;H}$,
where $\|\cD u\|_{2;H\otimes H}=\|u\|_{L^2(\Omega;H\otimes H)}$.
The second-order Malliavin derivative is the composition of the operators described above, a closed operator $\cD^2:L^2(\Omega)\to L^2(\Omega,H\otimes H)$.

The \emph{divergence operator} or \emph{Skorokhod integral} $\delta: L^2(\Omega;H)\mapsto L^2(\Omega)$ is defined as the $L^2(\Omega; H)$ adjoint of $\cD$, that is,
\[
\E\langle \cD F, u\rangle_{L^2(\Omega;H)}=\E\left[ F\delta(u)\right],\qquad u\in \mathrm{Dom}~\delta,
\]
where $\mathrm{Dom}~\delta$ is defined as the set of those $u\in L^2(\Omega;H)$ such that the left-hand side of the indicated identity defines a bounded functional as a function of $F$. In particular, $H$ can be embedded naturally into $\mathrm{Dom}~\delta$ and $\delta(h)=B(h)$ for any $h\in H$.

\begin{proposition}[\cite{Nualart2006}, Proposition 2.1.1]\label{prop:Nua-dens}
Let $F\in L^2(\Omega)\in \mathrm{D}_{1,2}$ and assume that 
\begin{equation}\label{eq:D-per-D-in-dom-delta}
\frac{\cD F}{\|\cD F\|_{H}^2}\in\mathrm{Dom}~\delta.
\end{equation}
Then $F$ has a bounded continuous density given by
\begin{equation}\label{eq:density-formula-abstract}
p(z)=\E\left[\ONE_{\{F>z\}}\delta\left(\frac{\cD F}{\|\cD F\|_{H}^2}\right)\right].
\end{equation}
\end{proposition}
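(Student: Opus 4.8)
The plan is to establish the density formula by the classical Malliavin integration-by-parts scheme: test the law of $F$ against smooth functions, move the derivative onto $F$ via the chain rule, integrate by parts using the $\cD$–$\delta$ duality, and then read off the density with Fubini's theorem. Throughout, abbreviate $u=\cD F/\|\cD F\|_{H}^2$; note that hypothesis \eqref{eq:D-per-D-in-dom-delta} already presupposes that $u$ is a genuine element of $L^2(\Omega;H)$, which in particular forces $\|\cD F\|_{H}>0$ almost surely, so every expression below is well defined and, moreover, $\delta(u)\in L^2(\Omega)$.

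First I would fix an arbitrary $\varphi\in C_0^\infty(\R)$ and put $\phi(y)=\int_{-\infty}^{y}\varphi(z)\,dz$, so that $\phi$ is bounded and smooth with bounded derivative $\phi'=\varphi$. The chain rule for the Malliavin derivative then gives $\phi(F)\in\mathrm{D}_{1,2}$ and $\cD\,\phi(F)=\varphi(F)\,\cD F$, whence
\[
\langle \cD\,\phi(F),\,u\rangle_H=\varphi(F)\,\frac{\langle\cD F,\cD F\rangle_H}{\|\cD F\|_{H}^2}=\varphi(F).
\]
Since $\phi(F)\in\mathrm{D}_{1,2}$ and $u\in\mathrm{Dom}~\delta$, the defining duality of the divergence operator yields
\[
\E[\varphi(F)]=\E\big[\langle\cD\,\phi(F),u\rangle_H\big]=\E\big[\phi(F)\,\delta(u)\big],
\]
the right-hand side being finite because $\phi$ is bounded and $\delta(u)\in L^2(\Omega)$.

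Next I would write $\phi(F)=\int_{\R}\ONE_{\{F>z\}}\varphi(z)\,dz$ and apply Fubini's theorem, which is legitimate since $\varphi$ has compact support and $\delta(u)\in L^1(\Omega)$, to obtain
\[
\E[\varphi(F)]=\int_{\R}\varphi(z)\,\E\!\left[\ONE_{\{F>z\}}\delta(u)\right]dz .
\]
As $\varphi\in C_0^\infty(\R)$ is arbitrary, this shows that the law of $F$ is absolutely continuous with density $p(z)=\E\big[\ONE_{\{F>z\}}\delta(u)\big]$, which is exactly \eqref{eq:density-formula-abstract}. Boundedness is then immediate: $|p(z)|\le\E|\delta(u)|\le\|\delta(u)\|_2<\infty$, uniformly in $z$. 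For continuity, for $z\le z'$ one has $|p(z)-p(z')|\le\E\big[\ONE_{\{z<F\le z'\}}|\delta(u)|\big]$; since $F$ now has no atoms and $|\delta(u)|\in L^1(\Omega)$, dominated convergence gives the right-hand side $\to 0$ as $z'\downarrow z$, and a symmetric argument gives the left limit, so $p$ is continuous.

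I do not expect a genuinely hard step here — this is a standard computation — so the "main obstacle" is simply the bookkeeping that makes each manipulation rigorous: the validity of the chain rule $\cD\,\phi(F)=\varphi(F)\,\cD F$ for $F\in\mathrm{D}_{1,2}$ with a Lipschitz $\phi$, the observation that \eqref{eq:D-per-D-in-dom-delta} upgrades $u$ to a bona fide $L^2(\Omega;H)$ random variable with $\|\cD F\|_{H}>0$ a.s., and the integrability checks needed to invoke the duality formula and Fubini.
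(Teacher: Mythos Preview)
The paper does not prove this proposition; it is quoted verbatim as Proposition~2.1.1 of \cite{Nualart2006} and used as a black box. Your argument is correct and is essentially the proof given in Nualart's book: antiderivative of a test function, chain rule for $\cD$, duality with $\delta$, Fubini to extract the density, and then the elementary boundedness/continuity check.
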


We are going to use this result with $F=M_\e'$ under the measure $\P=\P_{\e x}$. We will write $p^x_{\eps}(z)$ for its density given by \eqref{eq:density-formula-abstract}. We are going to compare $M_\e'$ with the limiting random variable
\begin{equation}
I= x+\sigma(0)\int_0^\infty e^{-\lambda t}dW(t),
\label{eq:I}
\end{equation}
which is a centered Gaussian random variable with density $p^{x}(z)$. In what follows, $C$ will denote a positive finite constant, independent of $\e$, $x$, and $t$, which may change on each appearance.

\begin{lemma}\label{lem:M-and-I-close}
Let $Y_\e(0)=\e x$ with $|x|\leq K(\e)$, where $K(\e)$ satisfies \eqref{eq:K-criterion}. Then for any $\gamma\in(0,2)$ and sufficiently small $\eps$, we have
\begin{equation*}
\E |I-M'_\eps|^2\leq C\e^{\gamma}.
\end{equation*}
\end{lemma}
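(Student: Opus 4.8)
The plan is to estimate $I - M'_\eps$ by writing both as explicit stochastic integrals against $W$ and controlling the difference term by term. Recall
\[
M'_\eps = x + U_\eps(T'_\eps) + V_\eps(T'_\eps), \qquad I = x + \sigma(0)\int_0^\infty e^{-\lambda t}\,dW(t),
\]
so that
\[
I - M'_\eps = \int_0^{T'_\eps} e^{-\lambda s}\big(\sigma(0) - \tilde\sigma(Y_\eps(s))\big)\,dW(s) + \sigma(0)\int_{T'_\eps}^\infty e^{-\lambda s}\,dW(s) - V_\eps(T'_\eps).
\]
First I would handle the three pieces separately. The middle (tail) term is Gaussian with variance $\sigma^2(0)\int_{T'_\eps}^\infty e^{-2\lambda s}\,ds = \tfrac{\sigma^2(0)}{2\lambda}e^{-2\lambda T'_\eps}$; by the lower bound in~\eqref{eq:T-prime-cond}, $e^{-2\lambda T'_\eps} \le C\eps^{2 - 2c/\log\eps^{-1}} = C\eps^2$, so this contributes $O(\eps^2)$ to the second moment. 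The drift correction satisfies $|V_\eps(T'_\eps)| \le C\eps$ by~\eqref{eq:contributions-to-M}, hence also contributes $O(\eps^2)$. The main term is the first one, and the Itô isometry gives
\[
\E\left[\left(\int_0^{T'_\eps} e^{-\lambda s}\big(\sigma(0) - \tilde\sigma(Y_\eps(s))\big)\,dW(s)\right)^2\right] = \int_0^{T'_\eps} e^{-2\lambda s}\,\E\big[(\sigma(0) - \tilde\sigma(Y_\eps(s)))^2\big]\,ds.
\]

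The core of the argument is thus to show $\E[(\sigma(0) - \tilde\sigma(Y_\eps(s)))^2]$ is small, uniformly enough over $s$, after integrating against the integrable weight $e^{-2\lambda s}$. Since $\tilde\sigma$ is $\cC^1$ with $\tilde\sigma(0) = f'(g(0))\sigma(g(0)) = \sigma(0)$ (using $f'(0)=1$, $g(0)=0$), Lipschitzness gives $|\sigma(0) - \tilde\sigma(Y_\eps(s))| \le C|Y_\eps(s)|$, so I need a bound on $\E|Y_\eps(s)|^2$. From $Y_\eps(s) = \eps e^{\lambda s}M_\eps(s)$ and $|M_\eps(s)| \le |x| + |U_\eps(s)| + |V_\eps(s)|$, Lemma~\ref{lem:exp-marting-ineq} gives $\E[\sup_{s\ge0}|U_\eps(s)|^2] \le C$, and since $|x| \le K(\eps)$ with $K(\eps)$ sub-polynomial, $\E|M_\eps(s)|^2 \le C(1 + K(\eps)^2) \le C\eps^{-\delta}$ for any $\delta>0$ and small $\eps$. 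Hence $\E|Y_\eps(s)|^2 \le C\eps^{2-\delta}e^{2\lambda s}$, and
\[
\int_0^{T'_\eps} e^{-2\lambda s}\,\E\big[(\sigma(0)-\tilde\sigma(Y_\eps(s)))^2\big]\,ds \le C\eps^{2-\delta}\int_0^{T'_\eps}e^{-2\lambda s}e^{2\lambda s}\,ds = C\eps^{2-\delta}T'_\eps.
\]
Since $T'_\eps = O(\log\eps^{-1})$, this is $o(\eps^{2-2\delta})$, hence $\le C\eps^\gamma$ for any $\gamma<2$ once $\delta$ is chosen small enough depending on $\gamma$. Combining the three pieces with $(a+b+c)^2 \le 3(a^2+b^2+c^2)$ yields $\E|I-M'_\eps|^2 \le C\eps^\gamma$.

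\textbf{Main obstacle.} The only delicate point is that the integrand $e^{-2\lambda s}\E[(\sigma(0)-\tilde\sigma(Y_\eps(s)))^2]$ does \emph{not} decay in $s$ after the worst-case bound $\E|Y_\eps(s)|^2 \le C\eps^{2-\delta}e^{2\lambda s}$ is plugged in — the exponentials cancel, leaving a flat integrand whose integral grows like $T'_\eps$. This is why the upper bound on $T'_\eps$ in~\eqref{eq:T-prime-cond} (through $\log\eps^{-1}$) is essential and why we can only reach exponents $\gamma < 2$ rather than $\gamma = 2$: the $\eps^{-\delta}$ slack from bounding $K(\eps)$ and the $\log\eps^{-1}$ factor from $T'_\eps$ both have to be absorbed. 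One should be slightly careful that the bound $|\sigma(0) - \tilde\sigma(Y_\eps(s))| \le C|Y_\eps(s)|$ is valid globally (not just near $0$), which holds because $\tilde\sigma$ is bounded with bounded derivative by construction, so for $|Y_\eps(s)|$ large the left side is bounded by a constant and the inequality holds trivially with a possibly larger $C$. Everything else is a routine application of the Itô isometry, Lemma~\ref{lem:exp-marting-ineq}, and the growth condition~\eqref{eq:K-criterion}.
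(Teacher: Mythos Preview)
Your proposal is correct and follows essentially the same approach as the paper: the identical three-term decomposition of $I-M'_\eps$, the It\^o isometry combined with the Lipschitz bound $|\sigma(0)-\tilde\sigma(y)|\le C|y|$ and the estimate $\E|Y_\eps(s)|^2\le C\eps^2(1+K(\eps)^2)e^{2\lambda s}$, and the same handling of the tail and drift terms. The paper even uses the same elementary inequality $(a+b+c)^2\le 3(a^2+b^2+c^2)$ to combine the pieces.
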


\begin{proof}
We start by estimating
\begin{align}\label{eq:EYsquare-bound}
\E\left[Y_\e(t)\right]^2&=\e^2 e^{2\lambda t}\E\left[M_\e(t)\right]^2=\e^2 e^{2\lambda t}\E\left[x+U_\e(t)+V_\e(t)\right]^2 \\
\notag&\leq C\e^2 e^{2\lambda t}\left(|x|^2+\E \left[U_\e(t)\right]^2+ \E\left[V_\e(t)\right]^2\right)\leq C\e^2e^{2\lambda t} (1+|x|^2),
\end{align}
where the first inequality is due to the elementary $(a+b+c)^2\leq 3(a^2+b^2+c^2)$, while the second one follows from Lemma \ref{lem:exp-marting-ineq} and~\eqref{eq:contributions-to-M}.

Next, we write
\begin{equation}\label{eq:I-M-decomp}
I-M_\e'=\int_0^{T_\e'}e^{-\lambda t}\left[\sigma(0)-\tilde\sigma(Y_\e(t))\right]dW(t)+ \sigma(0)\int_{T_\e'}^\infty e^{-\lambda t}dW(t)-V_\e(T_\e')=J_1(\eps)+J_2(\eps)+J_3(\eps). 
\end{equation}
Using the boundedness of $\tilde\sigma'(x)$ and the identity $\tilde\sigma(0)=\sigma(0)$ (which is due to $f'(0)=1$), we can write
\[
\E J_1^2(\eps)\le C\int_0^{T_\e'}e^{-2\lambda t}\E\left[Y_\e(t)\right]^2 dt\leq C\e^2(1+K^2(\e))T_\e'\leq C\e^{\gamma}
\]
for any $\gamma\in (0,2)$ and small $\eps$, where the first inequality is due to \eqref{eq:EYsquare-bound} and $|x|\leq K(\e)$.
By \eqref{eq:T-prime-cond}, 
\[
\E J_2^2(\eps)\le \sigma^2(0)\E\left[\int_{T_\e'}^\infty e^{-\lambda t}dW(t)\right]^2=\frac{\sigma^2(0)}{2\lambda}e^{-2\lambda T_\e'}\leq C \e^2
\]
for sufficiently small $\e$. Also, $\E J_3^2(\eps)\leq C\e^2$ by \eqref{eq:contributions-to-M}. 
 Combining \eqref{eq:I-M-decomp} with the the elementary inequality $(a+b+c)^2\leq 3(a^2+b^2+c^2)$ and the above estimates, we finish the proof.
\end{proof}

To use Proposition \ref{prop:Nua-dens}, we need to control the divergence and have certain estimates on the Malliavin derivatives of $M_\e'$. This is the content of the next two results. 

\begin{proposition}[\cite{Nualart2006}, Proposition 1.5.8] \label{prop:div-estim}
There is  $C>0$ such that
\[
\|\delta(u)\|_2\leq C\left(\|\E u\|_H+\|\cD u\|_{2;H\otimes H}\right),
\]
where it is implicit that the finiteness of the right-hand side implies $u\in\mathrm{Dom}~\delta$.
\end{proposition}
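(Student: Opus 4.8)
Since Proposition~\ref{prop:div-estim} is a classical fact (\cite[Prop.~1.5.8]{Nualart2006}), the plan for a self-contained argument is to recall the Wiener chaos computation behind it. First I would expand $u\in L^2(\Omega;H)$ into its chaos series $u=\sum_{n\ge 0}I_n(f_n)$ with kernels $f_n\in L^2(\R_+^n;H)$ that are symmetric in their first $n$ arguments; under the canonical identification $L^2(\R_+^n;H)\cong L^2(\R_+^{n+1})$ these are genuine square-integrable functions of $n+1$ real variables. The zeroth kernel is $f_0=\E u$, so $\|f_0\|_H=\|\E u\|_H$, and differentiating each chaos gives $\cD\bigl(I_n(f_n)\bigr)=nI_{n-1}(f_n(\cdot,\star))$, hence $\|\cD u\|_{2;H\otimes H}^2=\sum_{n\ge 1}n\,n!\,\|f_n\|_{L^2(\R_+^{n+1})}^2$.

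Next I would invoke the chaos expansion of the divergence, $\delta(u)=\sum_{n\ge 0}I_{n+1}(\widetilde f_n)$, where $\widetilde f_n$ denotes the symmetrization of $f_n$ over all $n+1$ variables. This representation already makes the domain statement transparent: the series converges in $L^2(\Omega)$, i.e.\ $u\in\mathrm{Dom}\,\delta$, precisely when $\sum_{n\ge 0}(n+1)!\,\|\widetilde f_n\|^2<\infty$, in which case $\|\delta(u)\|_2^2=\sum_{n\ge 0}(n+1)!\,\|\widetilde f_n\|^2$. The single genuine inequality is that symmetrization is an $L^2(\R_+^{n+1})$-contraction, $\|\widetilde f_n\|\le\|f_n\|$, which is immediate from the triangle inequality since each permutation of the $n+1$ variables preserves the $L^2$-norm.

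Combining these facts yields $\|\delta(u)\|_2^2\le\sum_{n\ge 0}(n+1)!\,\|f_n\|^2=\|\E u\|_H^2+\sum_{n\ge 1}(n+1)!\,\|f_n\|^2$, and since $(n+1)!\le 2\,n\,n!$ for every $n\ge 1$, the tail sum is at most $2\|\cD u\|_{2;H\otimes H}^2$. Therefore $\|\delta(u)\|_2^2\le\|\E u\|_H^2+2\|\cD u\|_{2;H\otimes H}^2\le 2\bigl(\|\E u\|_H+\|\cD u\|_{2;H\otimes H}\bigr)^2$, which is the assertion with $C=\sqrt 2$; the same chain shows that finiteness of the right-hand side forces $\sum_n(n+1)!\|\widetilde f_n\|^2<\infty$, hence $u\in\mathrm{Dom}\,\delta$. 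The only mildly delicate point is the bookkeeping — keeping straight the two distinct symmetry requirements (symmetry in the $n$ chaos variables versus the full symmetrization including the $H$-variable) together with the identification $L^2(\R_+^n;H)\cong L^2(\R_+^{n+1})$; once that is set up, every estimate above is one line.
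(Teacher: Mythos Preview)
Your argument is correct and is precisely the standard chaos-expansion proof of this inequality (this is how Nualart establishes it). The paper itself does not prove the proposition at all---it simply quotes \cite[Proposition~1.5.8]{Nualart2006} as a black box---so your write-up is strictly more detailed than the paper's treatment, and there is nothing to compare.
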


Let us recall that 
$
T_{\e,k}=k T_{\e}'$, and $M_{\e,k}=M_\e(T_{\e,k}),$ $k=1,\dots, N,$ were introduced in Section~\ref{sec:linear}.

\begin{lemma}\label{lem:mall-deriv-estim}
Let $Y_\e(0)=\e x$ and let $K(\e)$ satisfy \eqref{eq:K-criterion}. Then for every
$k=1,\ldots,N$,
\begin{equation}
\label{eq:convergence_of-DM}
\lim_{\e\downarrow 0}\sup_{|x|\leq K(\e)}\|\cD M_{\e,k}-\cD I\|_{2;H}=0,
\end{equation}
and, for every $m\geq 1$, 
\begin{align}
\label{eq:negative-moment-bounded}
\limsup_{\e\to 0}\sup_{|x|\leq K(\e)}\E \|\cD M_{\e,k} \|_H^{-m}<\infty,\\
\label{eq:D-second-vanishes}
 \lim_{\e\downarrow 0}\sup_{|x|\leq K(\e)}\E\|\cD^2 M_{\e,k}\|_{2;H\otimes H}^m=0.
\end{align}
\end{lemma}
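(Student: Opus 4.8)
The plan is to analyze the process $M_\e(t)$ through its Duhamel representation \eqref{eq:Y-M-corresp}, compute the Malliavin derivatives via the associated variational equation, and compare everything to the limiting Gaussian $I$ from \eqref{eq:I}. First I would set up the Malliavin calculus for the SDE \eqref{eq:lin-SDE}: since $\tilde\sigma$ and $h$ are $\cC^1$ with bounded derivatives and the equation has a strong solution on all of $\R$ for deterministic times, $Y_\e(t)\in\mathrm{D}_{1,2}$ for every $t$, hence so is $M_\e(t)=e^{-\lambda t}Y_\e(t)/\e$, and likewise $M_{\e,k}\in\mathrm{D}_{2,2}$. The Malliavin derivative $D_r Y_\e(t)$ for $r\le t$ solves the linear equation $d(D_r Y_\e(t)) = (\lambda + \tfrac{\e^2}{2}h'(Y_\e(t)))D_r Y_\e(t)\,dt + \e\tilde\sigma'(Y_\e(t))D_r Y_\e(t)\,dW(t)$ with initial datum $D_r Y_\e(r)=\e\tilde\sigma(Y_\e(r))$, so $D_r Y_\e(t)=\e\tilde\sigma(Y_\e(r))\Phi_\e(r,t)$ where $\Phi_\e(r,t)=\exp\{\lambda(t-r)+O(\e)\cdot(\text{bounded})+\e\int_r^t\tilde\sigma'(Y_\e)dW\}$ is the linearized flow; the last stochastic integral has bounded quadratic variation, so $\Phi_\e(r,t)=e^{\lambda(t-r)}(1+\rho_\e(r,t))$ with $\rho_\e$ small in every $L^p$ by Lemma~\ref{lem:exp-marting-ineq}. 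Consequently $D_r M_\e(t)=e^{-\lambda t}D_r Y_\e(t)/\e = e^{-\lambda r}\tilde\sigma(Y_\e(r))(1+\rho_\e(r,t))\ONE_{r\le t}$, which should be compared to $D_r I=\sigma(0)e^{-\lambda r}\ONE_{r\ge 0}$.

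For \eqref{eq:convergence_of-DM} I would then estimate $\|\cD M_{\e,k}-\cD I\|_{2;H}^2 = \E\int_0^\infty |D_r M_{\e,k}-D_r I|^2\,dr$ by splitting $\tilde\sigma(Y_\e(r))-\sigma(0)$ using $\tilde\sigma(0)=\sigma(0)$, boundedness of $\tilde\sigma'$, and the bound $\E[Y_\e(r)]^2\le C\e^2e^{2\lambda r}(1+|x|^2)$ from \eqref{eq:EYsquare-bound}; the factor $e^{-2\lambda r}$ in the integrand controls the growth, the tail $r>T_{\e,k}$ contributes $O(e^{-2\lambda T_{\e,k}})=O(\e^2)$-type terms, and the $\rho_\e$ correction is handled by Cauchy--Schwarz and its $L^p$ smallness. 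This is essentially the same computation as in Lemma~\ref{lem:M-and-I-close}, carried out at the level of derivatives, and gives the $\sup_{|x|\le K(\e)}$ bound because $K(\e)$ enters only through $\e^2K^2(\e)T_{\e,k}\to 0$. For \eqref{eq:D-second-vanishes} I would differentiate the variational equation once more: $\cD^2 M_{\e,k}$ is governed by a linear equation whose inhomogeneity carries an explicit factor of $\e$ (coming from $\e\tilde\sigma''$ and $\e\tilde\sigma'$ multiplying products of first derivatives), and since all the coefficients and first derivatives have moments bounded uniformly in $\e$ and $x$ (again by Lemma~\ref{lem:exp-marting-ineq}), one gets $\E\|\cD^2 M_{\e,k}\|_{2;H\otimes H}^m\le C\e^m\to 0$.

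The main obstacle is the negative-moment bound \eqref{eq:negative-moment-bounded}: showing $\|\cD M_{\e,k}\|_H^2=\int_0^{T_{\e,k}}|\tilde\sigma(Y_\e(r))|^2 e^{-2\lambda r}|1+\rho_\e(r,T_{\e,k})|^2\,dr$ is bounded below away from $0$ with all negative moments uniformly controlled. The point is that the dominant contribution comes from $r$ near $0$, where $Y_\e(r)$ is $O(\e)$ and hence $\tilde\sigma(Y_\e(r))$ is close to $\sigma(0)>0$; more precisely, on a fixed small interval $[0,r_0]$ one has $|\tilde\sigma(Y_\e(r))|\ge\sigma(0)/2$ except on an event of the form $\{\sup_{r\le r_0}|U_\e(r)|\ge\text{const}/\e^{?}\}$ or $\{\sup |Y_\e|\ge\text{const}\}$ which has super-polynomially (indeed exponentially) small probability by Lemma~\ref{lem:exp-marting-ineq}; combining the lower bound on the good event with the exponential smallness on the bad event yields $\E\|\cD M_{\e,k}\|_H^{-m}\le C$ uniformly. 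One also needs the correction factor $|1+\rho_\e|$ to stay away from $0$, which again follows from the exponential martingale inequality. Once \eqref{eq:convergence_of-DM}--\eqref{eq:D-second-vanishes} are in hand, Propositions~\ref{prop:Nua-dens} and~\ref{prop:div-estim} give the density formula and, together with Lemma~\ref{lem:M-and-I-close}, the comparison $p^x_\e\to p^x$ with the Laplace-type decay $e^{|x-z|}$ claimed in Proposition~\ref{prop:main-malliavin-lemma}.
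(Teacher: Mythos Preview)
Your setup via the variational equation and the Dol\'eans--Dade representation of $\cD_t M_\e(u)$ is correct and matches the paper, and your argument for \eqref{eq:convergence_of-DM} is essentially the paper's. There are, however, gaps in the other two estimates.

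For \eqref{eq:negative-moment-bounded}, the good-event/bad-event split is incomplete as written: exponential smallness of the bad event alone does not give $\E\|\cD M_{\e,k}\|_H^{-m}<\infty$, because on the bad event you have no a~priori control on $\|\cD M_{\e,k}\|_H^{-m}$, and pairing the two via H\"older is circular. To make your route work you would need a quantitative layer-cake argument tracking how the lower bound on $\|\cD M_{\e,k}\|_H$ degrades as the event worsens. The paper sidesteps this entirely: it applies Jensen's inequality for $x\mapsto x^{-m/2}$ with respect to the probability measure proportional to $e^{-2\lambda t}\,dt$ on $[0,T_\e']$, reducing the problem to a pointwise bound $e^{-m\lambda t}\E|\cD_t M_\e'|^{-m}\le C$, which is read off directly from the explicit exponential formula.

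The more serious gap is in \eqref{eq:D-second-vanishes}. Your claimed bound $\E\|\cD^2 M_{\e,k}\|_{H\otimes H}^m\le C\e^m$ is false. The inhomogeneity in the $\cD^2$ equation is not merely ``$\e$ times products of first derivatives with uniformly bounded moments'': differentiating $\tilde\sigma'(Y_\e(s))$ (equivalently, differentiating your $\rho_\e$) brings in $\cD_r Y_\e(s)=\e e^{\lambda s}\cD_r M_\e(s)$, and the factor $e^{\lambda s}$ is \emph{not} bounded---it grows up to $e^{\lambda T_\e'}$. In the paper's computation this produces a contribution of order $\e^{2m}e^{m\lambda T_\e'}$ to $\E\|\cD^2 M_\e'\|_{H\otimes H}^m$, which is controlled only thanks to the \emph{upper} bound $\lambda T_\e'\le(2-\kappa)\log\e^{-1}$ in \eqref{eq:T-prime-cond}, giving $\e^{2m}e^{m\lambda T_\e'}\le C\e^{m\kappa}\to 0$. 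This is not a technicality: without that upper bound on $T_\e'$ the second Malliavin derivative does not vanish, and this is precisely why Proposition~\ref{prop:main-malliavin-lemma} cannot be applied directly when $\theta\ge 1$ and the iterative scheme of Section~\ref{sec:linear} is needed. Your argument must track the $e^{\lambda s}$ growth and invoke \eqref{eq:T-prime-cond} explicitly.
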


\medskip

The proof of Lemma \ref{lem:mall-deriv-estim} will be given in Section \ref{sec:Mall-deriv-est}.
 We proceed now with the proof of Proposition \ref{prop:main-malliavin-lemma}.

\medskip

\noindent\textit{Proof of Proposition \ref{prop:main-malliavin-lemma}.}~
Let us first assume that $F=M_\e'$ satisfies \eqref{eq:D-per-D-in-dom-delta} and, as before, write $p^x_{\eps}(z)$ for its density. 
We are going to compare $M_\e'$ with the limiting random variable $I$ (introduced in~\eqref{eq:I}), which is a centered Gaussian random variable with density $p^{x}(z)$.

We start by introducing
\[
\Delta_\eps=\frac{\cD M'_\eps}{\|\cD M'_\eps\|_H^2}-\frac{\cD I}{\|\cD I\|_H^2}
\]
and writing
\begin{align}
\label{eq:approximating-density}
|p^x_{\eps}(z)-p^x(z)|=&\left|\E \left[ \ONE_{\{M'_\eps>z\}}\delta\left(\frac{\cD M'_\eps}{\|\cD M'_\eps\|_H^2}\right)-\ONE_{\{I>z\}}\delta\left(\frac{\cD I}{\|\cD I\|_H^2}\right)\right]\right|
\\
\notag
= &\left|\E \left[ \ONE_{\{M'_\eps>z\}}\delta(\Delta_\eps)\right]\right| +\left|\E \left[ \left(\ONE_{\{M'_\eps>z\}}-\ONE_{\{I>z\}}\right)\delta\left(\frac{\cD I}{\|\cD I\|_H^2}\right)\right]\right|
\\
\notag
\le&
%  \left(\E \left[\delta\left(\frac{\cD M'_\eps}{\|\cD %M'_\eps\|_H^2}-\frac{\cD I}{\|\cD I\|%_H^2}\right)\right]^2\right)^{1/2}\\
%\notag
%&+\left(\E \left[\ONE_{\{M'_\eps>z\}}-\ONE_{\{I>z\}}\right]^2\E %\left[ \delta\left(\frac{\cD I}{\|\cD I\|%_H^2}\right)\right]^2\right)^{1/2}
%\\
%&
%=
\left[\P\Big(|U_\eps(T'_\eps)+V_\eps(T'_\eps)| > |z-x | \Big)\right]^{\frac{1}{2}}\left\|\delta(\Delta_\eps)\right\|_{2}+
\|\ONE_{\{M'_\eps>z\}}-\ONE_{\{I>z\}}\|_2 \left\| \delta\left(\frac{\cD I}{\|\cD I\|_H^2}\right)\right\|_2.
\notag
\end{align}
In the last step, besides  the Cauchy--Schwartz inequality, we used that 
(due to $\E\, \delta(\Delta_\eps)=0$)
\[\E\, \ONE_{\{U_\eps(T'_\eps)+V_\eps(T'_\eps)>z-x\}} \delta(\Delta)= - \E\, \ONE_{\{U_\eps(T'_\eps)+V_\eps(T'_\eps)\le z-x\}} \delta(\Delta).\]

Let us first deal with the second term on the right-hand side of \eqref{eq:approximating-density}. Observe
\begin{equation}\label{eq:DI-formulas}
\cD_t I=e^{-\lambda t}\sigma(0),\qquad \|\cD I\|_H^2=\frac{\sigma^2(0)}{2\lambda},
\end{equation}
so $\D I$ is deterministic and
\begin{equation}\label{eq:div-I-norm-bounded}
 \left\| \delta\left(\frac{\cD I}{\|\cD I\|_H^2}\right)\right\|_2=\frac{2\lambda}{\sigma^2(0)}\left\|\int_0^\infty e^{-\lambda t}\sigma(0)dW(t)\right\|_2=\frac{\sigma(0)}{\sqrt{2\lambda}}<\infty.
\end{equation}
The other factor can be estimated as follows. Assuming $z>x$, for any $\eta\in(0,1)$,
\begin{align}
\notag
\E\left[\ONE_{\{M'_\eps>z\}}-\ONE_{\{I>z\}}\right]^2&= \P(M'_\eps\le z<I\ \text{or}\ I\le z<M'_\eps )
\\&\le 
\P\left(|I-z|\le \eta\right)+\P\left(|I-M'_\eps|>\eta, I>z\right) +\P\left(|I-M'_\eps|>\eta, M'_\eps>z\right)
\notag
\\&
\le Ce^{-(z-x)}\eta+\left(\P(I>z)^{1/2}+\P(M'_\eps>z)^{1/2}\right)\P(|I-M'_\eps|>\eta)^{1/2}
\notag
\\& 
\le Ce^{-(z-x)}\left(\eta+\frac{(\E|I-M'_\eps|^2)^{1/2}}{\eta}\right).
\label{eq:diff-of-indicators}
\end{align}
We used the Cauchy--Schwarz inequality and a crude estimate of the Gaussian density of $I$ in the third line; in the last line, we used the Markov inequality and Gaussian tails of $I$ and $M_\eps$ (the latter is due to Lemma~\ref{lem:exp-marting-ineq}). Choosing $\eta=\e^{\gamma'}$ for any $\gamma'\in (0,1/2)$ and invoking Lemma~\ref{lem:M-and-I-close} with $\gamma=4\gamma'$ shows that the right-hand side of \eqref{eq:diff-of-indicators} is bounded by $Ce^{-|z-x|}\eps^{\gamma'}$.
If $z<x$, a similar estimate can be applied to
$\left[\ONE_{\{M'_\eps>z\}}-\ONE_{\{I>z\}}\right]^2 = \left[\ONE_{\{M'_\eps\le z\}}-\ONE_{\{I\le z\}}\right]^2$.

 This and \eqref{eq:div-I-norm-bounded} imply that the second term on the right-hand side of \eqref{eq:approximating-density} has the desired rate of decay.

Next we have to estimate the first term on the right-hand side of \eqref{eq:approximating-density}. 
We wish to apply Proposition~\ref{prop:div-estim} to the $H$-valued random variable $u_\e$ given by
\begin{equation}
\label{eq:u_eps}
u_\e(t)=\frac{\cD_tM'_\eps}{\|\cD M'_\eps\|_H^2}-\frac{\cD_tI}{\|\cD I\|_H^2}=\cD_tM'_\e\frac{\|\cD I\|_H^2-\|\cD M'_\e\|_H^2}{\|\cD M'_\e\|_H^2\|\cD I\|_H^2}+\frac{\cD_tM'_\e-\cD_t I}{\|\cD I\|_H^2}.
\end{equation}
The $H$-norm of $u_\e$ can be estimated using the triangle inequality and \eqref{eq:DI-formulas}:
\begin{align}
\notag
\|u_\e\|_H &\le C\Bigg(\frac{\|\cD I\|_H^2-\|\cD M'_\e\|_H^2}{\|\cD M'_\e\|_H}+\|\cD_tM'_\e-\cD_t I\|_H\Bigg)
\\
\notag &\le C\Bigg(\frac{\big(\|\cD I\|_H^2-\|\cD M'_\e\|_H\big)\big(\|\cD I\|_H^2+\|\cD M'_\e\|_H\big)}{\|\cD M'_\e\|_H}+\|\cD_tM'_\e-\cD_t I\|_H\Bigg)
\\&\leq C\|\cD M'_\e-\cD I\|_H\left[1+\frac{1}{\|\cD M_\e'\|_H}\right].
\label{eq:u_eps2}
\end{align}
Using Jensen's inequality, \eqref{eq:u_eps2}, and the Cauchy--Schwartz inequality yields
\begin{align}
\label{eq:norm-of-expect}\|\E u_\e\|_H\leq\E\|u_\e\|_H
&\leq C\left[\E\|\cD M'_\e-\cD I\|_H+\E\frac{\|\cD M'_\e-\cD I\|_H}{\|\cD M_\e'\|_H}\right]\\
\notag&\leq C\|\cD M'_\e-\cD I\|_{2;H}\left[1+\sqrt{\E\|\cD M'_\eps\|_H^{-2}}\right]\to 0
\end{align}
as $\e\downarrow 0$, where the convergence is due to Lemma \ref{lem:mall-deriv-estim}. 

To treat the second term given by Proposition \ref{prop:div-estim}, we observe that $\cD I$ being deteriministic implies $\cD^2 I=0$ and thus
\[
\cD_su_\e(t)=\cD_s\left[\frac{\cD_tM'_\eps}{\|\cD M'_\eps|_H^2}\right],
 \]
where the right-hand side is estimated by (2.5) in \cite{Nualart2006} implying 
\[
\|\cD u_\e\|_{H\otimes H}\leq\frac{3\|\cD^2M'_\e\|_{H\otimes H}}{\|\cD M'_\e\|_H^2},
\]
so we can use the Cauchy--Schwartz inequality to obtain
\begin{equation}\label{eq:Du-estimate}
\|\cD u_\e\|_{2; H\otimes H}\leq C\left(\E\left[\|\cD M'_\e\|_H^{-8}\right]\right)^{\frac{1}{4}}\left(\E\|\cD^2M'_\e\|_{H\otimes H}^4\right)^{\frac{1}{4}}\to 0
\end{equation}
as $\e\downarrow 0$ by Lemma \ref{lem:mall-deriv-estim}. Combining \eqref{eq:norm-of-expect} and \eqref{eq:Du-estimate} with Proposition \ref{prop:div-estim} gives
\[
\lim_{\e\downarrow 0}\left\|\delta\left(\frac{\cD M'_\eps}{\|\cD M'_\eps\|_H^2}-\frac{\cD I}{\|\cD I\|_H^2}\right)\right\|_{2}=0
\]
uniformly in $|x|\leq K(\e)$. A standard estimate of $\P\Big(|U_\eps(T'_\eps)+V_\eps(T'_\eps)| > |z-x | \Big)$ finishes the verification of \eqref{eq:rate-of-convergence-of-densities}.

To complete the proof, it remains to show that $\tilde{u}_\e=\cD M_\e'/\|\cD M_\e'\|_H^2\in\mathrm{Dom}~\delta$. Clearly, Jensen's inequality and Lemma \ref{lem:mall-deriv-estim} imply
\[
\|\E\tilde u_\e\|_H\leq \E\|\tilde u_\e\|_H=\E\|\cD M_\e\|_H^{-1}<\infty, 
\]
while $\cD I=0$ and \eqref{eq:Du-estimate} give us
\[
\|\cD\tilde u_\e\|_{2;H\otimes H}=\|\cD u_\e\|_{2;H\otimes H}<\infty.
\]
The desired conclusion now follows from Proposition \ref{prop:div-estim} and the last two displays.
\qed
\section{Proof of Lemma \ref{lem:mall-deriv-estim}}\label{sec:Mall-deriv-est}

In this section, we complete the proof of Theorem \ref{thm:main-theorem} by proving Lemma \ref{lem:mall-deriv-estim}. 
We will prove it only for $k=1$, i.e., for $M'_\e=M_{\e,1}$. The proof for $k=2,\ldots,N$ is completely the same.
As in the previous section, $C$ will denote a positive finite constant, independent of $\e$, $x$, and $t$, which may change on each appearance and we will use the elementary inequality $\left(\sum_{i=1}^na_i\right)^m\leq n^{m-1}\sum_{i=1}^na_i^m$, Fubini's theorem, and the boundedness of $\tilde\sigma$, $h$ and all their derivatives without further mention.

We start by recalling that the Malliavin derivative $\cD_tM_\e(u)$ satisfies a stochastic integral equation \cite[Theorem 2.2.1]{Nualart2006})
\begin{align}\label{eq:DtM-integral-eq}
\cD_t M_\e(u)&=e^{-\lambda t}\tilde{\sigma}(Y_\e(t))+\int_t^ue^{-\lambda s}\tilde{\sigma}'(Y_\e(s))\cD_tY_\e(s)dW(s)+\frac{\e}{2}\int_t^u e^{-\lambda s}h'(Y_\e(s))\cD_tY_\e(s)ds\\
\notag&=e^{-\lambda t}\tilde{\sigma}(Y_\e(t))+\e\int_t^u\tilde{\sigma}'(Y_\e(s))\cD_tM_\e'(s)dW(s) +\frac{\e^2}{2}\int_t^u h'(Y_\e(s))\cD_tM_\e'(s)ds
\end{align}
for $u\geq t$, while $\cD_tM_\e(u)=0$ otherwise. Here, we used~\eqref{eq:Y-M-corresp} and 
Proposition~1.3.8 from~\cite{Nualart2006} (see also formula (1.65) in this reference) in the first step, and \eqref{eq:Y-M-corresp} in the second one. This integral equation is equivalent to a stochastic differential equation in the variable $u$:
\begin{equation}
d\left[\cD_tM_\e(u)\right]=\cD_tM_\e(u) dZ_\e(u),\qquad \cD_tM_\e(t)=e^{-\lambda t}\tilde\sigma(Y_\e(t)),
\label{eq:linearized-dynamics} 
\end{equation}
where we introduced the semimartingale
\[
Z_\e(u)=\e\int_t^u\tilde{\sigma}'(Y_\e(s))dW(s) +\frac{\e^2}{2}\int_t^u h'(Y_\e(s))ds.
\]
It is well known (see, e.g.,~\cite[Section~5.6]{KS1991}) that the solution of the linear equation~\eqref{eq:linearized-dynamics} is given by the Dol\'eans-Dade exponential
\begin{align}\label{eq:explicit-formula-DM}
\mathcal{D}_tM_\e(u)&=e^{-\lambda t}\tilde\sigma(Y_\e(t))\exp\left(Z_{\e}(u)-\frac{\langle Z_\e\rangle_u}{2}\right)\\
\notag&=e^{-\lambda t}\tilde\sigma(Y_\e(t)) \exp\left(\e\int_t^u\tilde{\sigma}'(Y_\e(s))dW(s)+\frac{\e^2}{2}\int_t^u\left(h'(Y_\e(s))-(\tilde\sigma'(Y_\e(s))^2\right)ds\right).
\end{align}

\begin{lemma}\label{lem:DM-estim-rough}
For any $m\geq 1$, we have
\[
\sup_{u\in[0,T_\e']}\E|\D_tM_{\e}(u)|^{m}\leq C e^{-m\lambda t}.
\]
\end{lemma}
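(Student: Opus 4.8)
The plan is to read off the moment bound directly from the explicit Dol\'eans--Dade representation \eqref{eq:explicit-formula-DM}. Write
\[
\D_tM_\e(u)=e^{-\lambda t}\tilde\sigma(Y_\e(t))\,\mathcal{E}_\e(t,u),\qquad \mathcal{E}_\e(t,u)=\exp\left(\e\int_t^u\tilde\sigma'(Y_\e(s))dW(s)+\frac{\e^2}{2}\int_t^u\big(h'(Y_\e(s))-(\tilde\sigma'(Y_\e(s)))^2\big)ds\right).
\]
Since $\tilde\sigma$ is bounded, the prefactor contributes only a constant times $e^{-\lambda t}$, so raising to the $m$-th power and taking expectations reduces the claim to the uniform bound $\sup_{0\le t\le u\le T_\e'}\E[\mathcal{E}_\e(t,u)^m]\le C$. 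First I would split $\mathcal{E}_\e(t,u)^m=\exp\big(m\e\int_t^u\tilde\sigma'(Y_\e(s))dW(s)-\tfrac{m^2\e^2}{2}\int_t^u(\tilde\sigma'(Y_\e(s)))^2ds\big)\cdot\exp\big(\tfrac{m^2\e^2}{2}\int_t^u(\tilde\sigma'(Y_\e(s)))^2ds+\tfrac{m\e^2}{2}\int_t^u(h'-(\tilde\sigma')^2)(Y_\e(s))ds\big)$; the first factor is the exponential of a martingale with compensator, i.e.\ a Dol\'eans--Dade exponential, hence a nonnegative supermartingale with expectation $\le 1$, and the second factor is deterministically bounded because $\tilde\sigma'$, $h'$ are bounded and $\e^2(u-t)\le \e^2 T_\e'\le C\e^2\log\e^{-1}\to 0$. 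This gives $\E[\mathcal{E}_\e(t,u)^m]\le C$ uniformly, and the lemma follows.

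Two small points of rigor I would address. To use the supermartingale property of $\exp\big(m\e\int_t^\cdot\tilde\sigma'(Y_\e)dW-\tfrac{m^2\e^2}{2}\int_t^\cdot(\tilde\sigma')^2(Y_\e)ds\big)$ and conclude its expectation is $\le 1$ I only need Novikov or, more simply, the boundedness of $\tilde\sigma'$ (which makes it a genuine martingale on $[t,u]$ with $u<\infty$, since $\E\exp(\tfrac12\int_t^u m^2\e^2(\tilde\sigma')^2(Y_\e)ds)<\infty$ trivially); alternatively one invokes Fatou for the supermartingale bound, which suffices here. Second, if one prefers not to invoke the martingale property, an equally quick route is H\"older's inequality: bound $\E[\mathcal{E}_\e(t,u)^m]$ by the product of $\big(\E\exp(2m\e\int_t^u\tilde\sigma'(Y_\e)dW-2m^2\e^2\int_t^u(\tilde\sigma')^2(Y_\e)ds)\big)^{1/2}$, which is $\le 1$, and $\big(\E\exp(C\e^2\int_t^u(\cdots)ds+2m^2\e^2\int_t^u(\tilde\sigma')^2(Y_\e)ds)\big)^{1/2}$, which is bounded by a deterministic constant via $\e^2 T_\e'\to0$.

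I do not expect any genuine obstacle: the only thing to watch is that the compensator terms carry an $\e^2$ factor against a time horizon $T_\e'=\cO(\log\e^{-1})$, so $\e^2 T_\e'\to 0$ and all such contributions are $\le$ a constant (in fact $\to 1$); this is exactly where condition \eqref{eq:T-prime-cond} on $T_\e'$ enters. The $e^{-m\lambda t}$ on the right-hand side comes for free from the explicit prefactor $e^{-\lambda t}\tilde\sigma(Y_\e(t))$ and requires no estimation of $Y_\e$ at all beyond boundedness of $\tilde\sigma$.
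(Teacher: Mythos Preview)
Your proposal is correct and is essentially the paper's own proof: both start from the explicit Dol\'eans--Dade representation \eqref{eq:explicit-formula-DM}, pull out $e^{-m\lambda t}$ via boundedness of $\tilde\sigma$, add and subtract the compensator so that a true exponential martingale (expectation~$1$ by Novikov, since $\tilde\sigma'$ is bounded) appears, and absorb the leftover Lebesgue-integral terms into the constant using $\e^2 T_\e'\to 0$. Your remarks on Novikov and the H\"older alternative are extra commentary but not needed for the argument.
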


\begin{proof}
We can use the explicit formula \eqref{eq:explicit-formula-DM} to estimate
\begin{align*}
\E|\D_tM_{\e}(u)|^{m}&\leq C e^{-m\lambda t}\E\left[\exp\left(\e m\int_t^u\tilde{\sigma}'(Y_\e(s))dW(s)+\frac{\e^2m}{2}\int_t^u\left(h'(Y_\e(s))-(\tilde\sigma'(Y_\e(s))^2\right)ds\right)\right]\\
&\leq C e^{-m\lambda t}\E\left[\exp\left(\e m\int_t^u\tilde{\sigma}'(Y_\e(s))dW(s)-\frac{\e^2m^2}{2}\int_t^u\left(\tilde\sigma'(Y_\e(s)\right)^2ds\right)\right]=C e^{-m\lambda t},
\end{align*}
where  we used $\e^2(u-t)\leq \e^2T_\e'\to 0$, as $\e\downarrow 0$, in the second inequality to manipulate the Lebesgue integrals in the exponent, while the last step is due to the martingale property of the process under the expectation.
\end{proof}

Let us now prove the claims of  Lemma~\ref{lem:mall-deriv-estim} one by one.

%\begin{lemma}\label{lem:DM-conv}
%Let $Y_\e(0)=\e x$ and $K(\e)$ satisfy \eqref{eq:K-criterion}. We have
%\[
%\lim_{\e\downarrow 0}\sup_{|x|\leq K(\e)}\|\cD M'_\e-\cD I\|_{2;H}=0.
%\]
%\end{lemma}

\begin{proof}[Proof of~\eqref{eq:convergence_of-DM}]
Let $I(u)=\sigma(0)\int_0^ue^{-\lambda s}dW(s)$. Then
\[
\cD_tI(u)=\sigma(0)e^{-\lambda t} \ONE_{\{t\leq u\}}.
\]
By the triangle inequality, we have
\[
|\cD_t M_{\e}'-\cD_t I|\leq|\cD_t M_{\e}(T_\e')-\cD_t I(T_\e')|+e^{-\lambda t}\sigma(0)\ONE_{\{t> T_\e'\}},
\]
so
\begin{align}
\label{eq:DM-DI-2-norm}
\|\cD M_{\e}-\cD I\|_{2;H}^2&=\int_0^\infty\E|\cD_t M_{\e}-\cD_t I|^2dt
\\
\notag& 
\leq C \int_0^\infty\E|\cD_t M_{\e}(T_\e')-\cD_t I(T_\e')|^2dt+C\sigma^2(0) e^{-2\lambda T_\e'}.
\end{align}
The second term on the right-hand side converges to zero simply by $T_\e'\to\infty$. To estimate the first term, we use \eqref{eq:DtM-integral-eq} and the triangle inequality:
\begin{equation}\label{eq:yet-another-formula}
|\cD_t M_{\e}(T_\e')-\cD_t I(T_\e')|\leq\ONE_{\{t\leq T_\e'\}}Ce^{-\lambda t}|\tilde\sigma(Y_\e(t))-\sigma(0)|
+|R_{\e}(t)|,
\end{equation}
where
\[
R_\e(t)=\e\int_t^{T_\e'}\tilde{\sigma}'(Y_\e(s))\cD_tM_\e'(s)dW(s) +\frac{\e^2}{2}\int_t^{T_\e'} h'(Y_\e(s))\cD_tM_\e'(s)ds.
\]
The second moment of this latter quantity can be bounded as
\[
\E|R_\e(t)|^2\leq C\e^2(1+\e^2T_\e')\int_t^{T_\e}\E|\cD_tM_\e'(s)|^2ds\leq C\e^2e^{-2\lambda t},
\]
where the first inequality is due to Jensen's inequality, while we used Lemma \ref{lem:DM-estim-rough} and $\lim_{\e\downarrow 0}\e^2T_\e'=0$ in the second one. Combining this with \eqref{eq:yet-another-formula} and \eqref{eq:EYsquare-bound}, we obtain
\begin{align*}
\E|\cD_t M_{\e}(T_\e')-\cD_t I(T_\e')|^2&\leq C\big(\ONE_{\{t\leq T_\e'\}}e^{-2\lambda t}\E|Y_\e(t)|^2
+\E|R_{\e}(t)|^2\big)\\
&\leq C\Big(\e^2 (1+K^2(\e))\ONE_{\{t\leq T_\e'\}}+\e^2e^{-2\lambda t}\Big),
\end{align*}
and integrating with respect to $t$ gives us
\[
\int_0^\infty\E|\D_t M_{\e,\lambda}(T_\e')-\D_t I(T_\e')|^2dt\leq C\e^2T_\e'(1+K^2(\e))\to 0
\]
as $\e\downarrow 0$. Therefore the right-hand side of \eqref{eq:DM-DI-2-norm} converges to zero finishing the proof.
\end{proof}

%Next we estimate the negative moments of $\cD_tM_\e'$.
%
%\begin{lemma}
%For every $m\geq 0$, there is an $\e_0>0$ such that
%\[
%\sup_{\e\in(0,\e_0]}\E\|\cD M'_\e\|_H^{-m}<\infty.
%\]
%uniformly over all initial conditions.
%\end{lemma}

\begin{proof}[Proof of~\eqref{eq:negative-moment-bounded}]
Applying Jensen's inequality for the convex function $x\mapsto |x|^{-m/2}$ and the measure on $[0,T_\e]$ with density
\[
\frac{e^{-2\lambda t}}{\int_0^{T_\e'}e^{-2\lambda s}ds}=\frac{2\lambda e^{-2\lambda t}}{1-e^{-2\lambda T_\e'}},
\]
we obtain
\begin{align*}
\E\|DM'_\e\|_H^{-m}&=\E\left(\int_0^{T_\e'}e^{-2\lambda t}e^{2\lambda t}|\cD_tM_\e'|^2dt\right)^{-m/2}
\\
&
\leq\left(\frac{2\lambda}{1-e^{-2\lambda T_\e'}}\right)^{\frac{m}{2}+1}\int_0^{T_\e'}e^{-2\lambda t}e^{-m\lambda t}\E|\cD_t M_\e'|^{-m}dt.
\end{align*}
Using the explicit formula \eqref{eq:explicit-formula-DM}, we can estimate
\begin{align*}
e^{-m\lambda t}\E|\cD_t M_\e'|^{-m}&\leq C\E \exp\left(-\e m\int_t^{T_\e'}\tilde{\sigma}'(Y_\e(s))dW(s)-\frac{\e^2m}{2}\int_t^{T_\e'}\left(h'(Y_\e(s))+(\tilde\sigma'(Y_\e(s))^2\right)ds\right)\\
&
\leq C\E\left[\exp\left(\e m\int_t^{T_\e'}\tilde{\sigma}'(Y_\e(s))dW(s)-\frac{\e^2m^2}{2}\int_t^{T_\e'}\left(\tilde\sigma'(Y_\e(s)\right)^2ds\right)\right]=C,
\end{align*}
where we used $\lim_{\e\downarrow 0}\e^2 T_\e'=0$ in the second inequality to manipulate the Lebesgue integrals, while the last step is due to  the martingale property of the exponential. Combining the last two displays finishes the proof.
\end{proof}

%Finally, we prove the estimate on the second Malliavin derivative of $M_\e'$. 
%
%\begin{lemma}
%For all $m\geq 1$, we have
%\[
%\lim_{\e\downarrow 0}\E\|\cD^2 M_\e'\|_{H\otimes H}^m=0.
%\]
%uniformly over all initial conditions.
%\end{lemma}

\begin{proof}[Proof of~\eqref{eq:D-second-vanishes}]
It is sufficient to prove the claim for $m\ge 2$ since convergence for $m\in[1,2)$ will follow from Lyapunov's inequality.
Taking the Malliavin derivative of both sides of \eqref{eq:DtM-integral-eq}
and using Proposition 1.3.8 in \cite{Nualart2006}, we obtain  that the second derivative $\cD^2_{r,t}M_\e'(u)$ satisfies the integral equation
\begin{align*}
\cD_{r,t}^2M_\e'(u)=&\e\tilde\sigma'(Y_\e(t))\cD_rM_\e'(t)+\e\tilde\sigma'(Y_\e(r)) \cD_tM_\e'(r)\\
&+\e^2\int_{r\vee t}^ue^{\lambda s}\tilde\sigma''(Y_\e(s))\cD_rM_\e'(s)\cD_tM_{\e}'s(s)dW(s)+\e\int_{r\vee t}^u \tilde\sigma'(Y_\e(s))\cD_{r,t}^2 M_\e(s)dW(s)\\
&+\frac{\e^3}{2}\int_{r\vee t}^u e^{\lambda s}h''(Y_\e(s))\cD_rM_\e'(s)\cD_tM_\e'(s)ds+\frac{\e^2}{2}\int_{r\vee t}^u h'(Y_\e(s))\cD_{r,t}^2M_\e'(s)ds,
\end{align*}
for $u\geq r\vee t$, while $\cD^2_{r,t}M_\e'(u)=0$ if $u<r\vee t$. 
%\blue{[The stochastic integrals are bounded using the BDG, Jensen's, and $u-r\vee t\leq T_\e$, e.g. the second one:
%\begin{align*}
%\E\left[\e\int_{r\vee t}^u \tilde\sigma'(Y_\e(s))\cD_{r,t}^2 M_\e(s)dW(s)\right]^m&\leq C\e^m\E\left[\int_{r\vee t}^u |\cD_{r,t}^2 M_\e(s)|^2ds\right]^{m/2}\\
%&\leq C\e^m(T_\e')^{\frac{m}{2}-1} \int_{r\vee t}^u\E\left|\cD_{r,t}^2M_\e'(s)\right|^mds,
%\end{align*}
%while the ordinary integrals, one does not need BDG. Then one uses $\e^{3m}<\e^{2m}$ and $T_\e^{\frac{m}{2}-1}<T_\e^{m-1}$ to collapse terms. I can add more details later.]
%}
Taking $m$-th  moments, we can apply the a priori bound in Lemma \ref{lem:DM-estim-rough}, the BDG inequality, and Jensen's inequality to estimate
\begin{align*}
\E|\cD_{r,t}^2M_\e'(u)|^m\leq& C\e^2\left(e^{-m\lambda t}+e^{-m\lambda r}\right)+C\e^{2m}(T_\e')^{\frac{m}{2}-1}\int_{r\vee t}^ue^{m\lambda s}\E\left|\cD_rM_\e'(s)\cD_tM_\e'(s)\right|^mds\\
&+C\e^m(T_\e')^{m-1}\int_{r\vee t}^u\E\left|\cD_{r,t}^2M_\e'(s)\right|^mds.
\end{align*}
The Cauchy--Schwartz inequality and Lemma \ref{lem:DM-estim-rough} again imply
\[
\E\left|\cD_rM_\e'(s)\cD_tM_\e'(s)\right|^m\leq \sqrt{\E|\cD_rM_\e'(s)|^{2m}\E|\cD_tM_\e'(s)|^{2m}}\leq C\sqrt{e^{-2m\lambda r}e^{-2m\lambda t}}= Ce^{-m\lambda(r+t)}.
\]
Combining the last two displays, we obtain
\begin{align*}
\E|\cD_{r,t}^2M_\e'(u)|^m \leq& C\e^m\left(e^{-m\lambda r}+e^{-m\lambda t}+\e^m(T_\e')^{\frac{m}{2}-1} e^{m\lambda (u-r-t)}\right)\\
&+C\e^m(T_\e')^{m-1}\int_{r\vee t}^u\E\left|\cD_{r,t}^mM_\e'(s)\right|^2ds.
\end{align*}
We recall Gronwall's inequality: if nonnegative functions $h, g, k$ satisfy
\[
h(u)\leq g(u)+\int_a^uk(s)h(s)ds,\quad u\ge a, 
\]
and $g$ is nondecreasing, then
\[
h(u)\leq g(u)e^{\int_a^uk(s)ds},\quad u\ge a.
\]
Using this with $h(u)=\E|\D_{r,t}^2M_\e'(u)|^m$, $a=r\vee t$,
\[
g(u)=C\e^m\left(e^{-m\lambda r}+e^{-m\lambda t}+\e^m(T_\e')^{\frac{m}{2}-1} e^{m\lambda (u-r-t)}\right),
\qquad
k(u)=C\e^m(T_\e')^{m-1},
\]
and setting $u=T_\e'$, we obtain
\begin{align*}
\E|\cD_{r,t}^2M_\e'|^m&\leq C\e^m\left(e^{-m\lambda r}+e^{-m\lambda t}+\e^m(T_\e')^{\frac{m}{2}-1} e^{m\lambda (T_\e'-r-t)}\right)e^{C\e^m(T_\e')^{m-1}(T_\e'-r\vee t)}\\
&\leq C\e^m\left(e^{-m\lambda r}+e^{-m\lambda t}+\e^m(T_\e')^{\frac{m}{2}-1} e^{m\lambda (T_\e'-r-t)}\right)e^{C\e^m(T_\e')^{m}}\\
&\leq C\left(\e^m \left(e^{-m\lambda r}+e^{-m\lambda t}\right)+\e^{2m}(T_\e')^{\frac{m}{2}-1}e^{m\lambda (T_\e'-r-t)}\right)\\
&\leq C\left(\e^m \left(e^{-m\lambda r}+e^{-m\lambda t}\right)+\e^{2m}\eps^{-\rho} e^{m\lambda (T_\e'-r-t)}\right)
\end{align*}
for any $\rho>0$ and sufficiently small $\e$, 
where we used $\lim_{\e\downarrow 0}\e^{a}(T_\e')^{b}= 0$ for all $a,b>0$. Using this along with Jensen's inequality, we obtain
\begin{align*}
\E\|\cD^2 M_\e'\|_{H\otimes H}^m&=\E\left(\int_0^{T_\e'}\int_0^{T_\e'}|\cD_{r,t}^2M_\e'|^2drdt\right)^{\frac{m}{2}}\leq (T_\e')^{m-2}\int_0^{T_\e'}\int_0^{T_\e'}\E|\cD_{r,t}^2M_\e'|^mdrdt\\
&\leq C\left(\e^m(T_\e')^{m}+\e^{2m-\rho}e^{m\lambda T_\e'}\right)\leq C\left(\e^m(T_\e')^{m}+\e^{m\kappa-\rho}\right),
\end{align*}
where we used \eqref{eq:T-prime-cond} in the last step. Choosing $\rho\in(0,m\kappa)$ to ensure that
the right-hand side of the previous display converges to zero, we finish the proof.
\end{proof}

\bibliographystyle{Martin}
\bibliography{citations}

\end{document}